\documentclass[12pt]{article}
\textwidth=15cm 
\textheight=20.5cm 
\topmargin=-0.5cm
\oddsidemargin=0.5cm
\usepackage{amssymb,amsmath,amsfonts}
\usepackage{amsthm}
\usepackage{multirow}
\usepackage{epsfig}
\usepackage{subfigure}
\usepackage{color}
\usepackage{slashbox}

\usepackage{mathabx}

\numberwithin{equation}{section}
\theoremstyle{plain}
\newtheorem{theorem}{Theorem}[section]
\newtheorem{corollary}[theorem]{Corollary}
\newtheorem{proposition}[theorem]{Proposition}
\newtheorem{lemma}[theorem]{Lemma}
\newtheorem{observation}[theorem]{Observation}
\theoremstyle{definition}
\newtheorem{definition}{Definition}[section]

\theoremstyle{remark}
\newtheorem{remark}{\rm\bf Remark}[section]

\allowdisplaybreaks



\begin{document}

\title{Uncorrelatedness sets of discrete uniform distributions via Vandermonde-type determinants}
\author{Mehmet Turan$^1$, Sofiya Ostrovska$^1$ and Ahmet Ya\c{s}ar \"{Ozban}$^2$}
\date{}
\maketitle

\begin{center}
{\it $^1$ Atilim University, Department of Mathematics, 06836, Ankara, Turkey}\\
{\it $^2$ Karatekin University, Department of Mathematics, Cankiri, Turkey}\\
{\it e-mail: mehmet.turan@atilim.edu.tr, sofia.ostrovska@atilim.edu.tr, ahmetyozban@karatekin.edu.tr}
\end{center}

\begin{abstract}
Given random variables $X$ and $Y$ having finite moments of all orders, their uncorrelatedness set is defined as the set of all pairs $(j,k)\in{\mathbb N}^2,$ for which $X^j$ and $Y^k$ are uncorrelated. It is known that, broadly put, 
any subset of ${\mathbb N}^2$ can serve as an uncorrelatedness set. This claim ceases to be true for random variables with prescribed distributions, in which case the need arises so as to identify the admissible uncorrelatedness sets. This paper studies the uncorrelatedness sets for positive random variables uniformly distributed on three points. Some general features of these sets are derived. Two related Vandermonde-type determinants are examined and applied to describe uncorrelatedness sets in specific cases.
\end{abstract}

{\bf Keywords}: Uncorrelatedness set, random variable, discrete uniform distribution, determinant

{\bf 2010 MSC:} 
60E05, 
62H20, 
15A15 
\section{Introduction and preliminaries}

Since the concept of independence is fundamental in Probability Theory, Mathematical Statistics and their applications, various notions have been developed related to the independence of random variables. 
See, for example \cite[Sections 3 and 7]{counter}. 
The earliest of such notions are uncorrelatedness and correlation coefficient, both widely used in statistical analysis. For a brief history and their relation to the independence property, see \cite{davidedwards, david}.
An extension of the uncorrelatedness property for the powers of random variables lead to the following definition.
\begin{definition}\cite{proc}
Let $(X, Y)$ be a random vector whose components, $X$ and $Y,$ possess finite moments of all orders. The set 
$${\mathcal U}(X,Y)=\{(j,k)\in{\mathbb N}^2: {\bf E}[X^jY^k]={\bf E}[X^j]\cdot{\bf E}[Y^k]\}$$
is called an {\it uncorrelatedness set} of $X$ and $Y.$
\end{definition}
Clearly, random variables $X$ and $Y$ are {\it uncorrelated} if and only if $(1,1)\in{\mathcal U}(X,Y).$ For independent random variables ${\mathcal U}(X,Y)={\mathbb N}^2,$ while the converse, principally, is not true. To some extent, uncorrelatedness sets provide a partial order for degrees of independence: One may say that the wider an uncorrelatedness set is, the more independent the random variables are. It has to be pointed out that the independence of degree $k,$ defined by 
C.M. Cuadras in \cite{Cuadras}, is stated in terms of uncorrelatedness sets as follows: $X$ and $Y$ are independent of degree $k$ if and only if ${\{(j,l):j+l\leqslant k+1\}\subseteq \mathcal U}(X,Y).$ Uncorrelatedness sets are also related to The Italian Problem proposed by J. Stoyanov, see \cite{stoyanov}.

It is known that, in general, for any ${\mathcal A}\subseteq {\mathbb N}^2,$ there exist $X$ and $Y$ such that ${\mathcal U}(X,Y)={\mathcal A}.$ However, the situation changes when $X$ and $Y$ have pre-assigned distributions. More explicitly, when the distributions of $X$ and $Y$ are fixed, not every subset of ${\mathbb N}^2$ can serve as an uncorrelatedness set. In such a case, the problem arises as to finding admissible uncorrelatedness sets. For random variables with absolutely continuous distributions, this problem has been considered in \cite{proc, sofiyajmaa}.

In this work, some properties of uncorrelatedness sets of random variables having discrete uniform distributions are studied alongside possible uncorrelatedness sets ranging from the empty set $\emptyset$ to ${\mathbb N}^2.$ In addition, some related problems on determinants are considered. 
More precisely, let $(X,Y)$ be a random vector, whose marginals are uniformly distributed 
on the set $\{a,b,c\},$ where $0<a<b<c.$ The notation $X\sim \textnormal{Unif}\{a,b,c\}$ means that $X$ is uniformly distributed on the set $\{a,b,c\}.$
The joint probability mass function of $X$ and $Y$ can be expressed in the form given in Table \ref{lab:pmf},
where $x_i,$ $i=1,2,3,4$ are such that each entry in the table is non-negative.
\begin{table}[htbp]
\centering
\caption{Joint probability mass function of $X$ and $Y$} \label{lab:pmf}
\begin{tabular}{|c|ccc|}\hline
\backslashbox{$Y$}{$X$}
& $a$ & $b$ & $c$ \\ \hline & & & \\
$a$ & $\frac19+x_4$      & $\frac19+x_3$     & $\frac19-x_3-x_4$ \\ & & & \\
$b$ & $\frac19+x_2$      & $\frac19+x_1$     & $\frac19-x_1-x_2$ \\ & & & \\
$c$ & $\frac19-x_2-x_4$  & $\frac19-x_1-x_3$ & $\frac19+x_1+x_2+x_3+x_4$ \\ & & & \\ \hline
\end{tabular}
\end{table}

The condition $(j,k)\in{\mathcal U}(X,Y)$ is now equivalent to
\begin{align}\label{syst}
x_1+A_jx_2+A_kx_3+A_jA_kx_4=0,
\end{align}
where $A_j=(c^j-a^j)/(c^j-b^j),$ $j\in{\mathbb N}.$
Due to the condition $0<a<b<c,$ the sequence $\{A_j\}$ is strictly decreasing.
Notice that ${\mathcal A}$ is an uncorrelatedness set for $X$ and $Y$ if and only if system \eqref{syst} is satisfied solely for $(j,k)\in {\mathcal A}$ and violated for $(j,k)\notin {\mathcal A}.$ 
\begin{remark}\label{sca}
Since the system is homogeneous, every non-trivial solution can be re-scaled in such a way that the corresponding entries in 
Table \ref{lab:pmf} become non-negative. Therefore, in the sequel we only distinguish trivial and non-trivial solutions.
\end{remark}
\begin{remark} \label{rem:W}
It is not difficult to see from Table \ref{lab:pmf} and equation \eqref{syst} that, if ${\mathcal A}\subseteq {\mathbb N}^2$ is an admissible uncorrelatedness set for $X,Y\sim \textnormal{Unif}\{a,b,c\},$ then so is $\tilde{\mathcal A}=\{(j,k): (k,j)\in{\mathcal A}\}.$
\end{remark}
The paper is organized as follows: Section \ref{sec:sets} contains a series of uncorrelatedness sets along with properties. 
Section \ref{sec:det} deals with the determinants in the study. Finally, Section \ref{sec:op} presents the concluding remarks and proposes an open problem.

\section{Properties of uncorrelatedness sets}\label{sec:sets}

To begin with, let us notice that for random variables uniformly distributed on three points an uncorrelatedness set may be empty.

\begin{observation}
There exist $X,Y\sim \textnormal{Unif}\{a,b,c\}$ such that ${\mathcal U}(X,Y)=\emptyset.$
\end{observation}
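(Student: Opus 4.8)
The plan is to exhibit an explicit joint law of the form displayed in Table \ref{lab:pmf} for which equation \eqref{syst} fails for every $(j,k)\in{\mathbb N}^2$; by the characterization stated immediately before Remark \ref{sca}, this is exactly the assertion that the uncorrelatedness set is empty. The guiding observation is that \eqref{syst} reads $x_1+A_jx_2+A_kx_3+A_jA_kx_4=0$, and the coefficients $A_j$, $A_k$, $A_jA_k$ all multiply $x_2,x_3,x_4$ only; hence if we annihilate these three parameters the left-hand side collapses to the constant $x_1$, independent of $j$ and $k$.

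Concretely, I would set $x_2=x_3=x_4=0$ and pick $x_1\neq 0$. Then for every pair $(j,k)$ the left-hand side of \eqref{syst} equals $x_1\neq 0$, so \eqref{syst} is violated throughout ${\mathbb N}^2$, which forces $\mathcal{U}(X,Y)=\emptyset$. It remains only to check that this choice corresponds to an admissible joint distribution. With $x_2=x_3=x_4=0$ the entries of Table \ref{lab:pmf} are all $\tfrac19$, except for two entries equal to $\tfrac19+x_1$ and two equal to $\tfrac19-x_1$; choosing, say, $x_1=\tfrac19$ (or any $0<x_1\le\tfrac19$) makes every entry non-negative, and one verifies at once that each row sum and each column sum is $\tfrac13$ for any values of the $x_i$, so the marginals are indeed $\mathrm{Unif}\{a,b,c\}$. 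Alternatively, one may take $x_1$ arbitrary non-zero and invoke Remark \ref{sca} to rescale into the admissible range.

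There is essentially no obstacle: the statement is witnessed by the simplest possible degenerate solution of \eqref{syst}, and the only point requiring a word of justification is that the resulting table genuinely defines a joint probability mass function with the prescribed uniform marginals, which is immediate from the non-negativity of the entries and the automatic row/column sums. Accordingly, I would present the argument in just a few lines.
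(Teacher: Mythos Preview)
Your proof is correct and is essentially identical to the paper's own argument: the paper also takes $x_1=1$, $x_2=x_3=x_4=0$, observes that \eqref{syst} then reduces to $1=0$ and is never satisfied, and invokes Remark~\ref{sca} for admissibility. Your additional explicit verification that $x_1=\tfrac19$ yields non-negative entries is fine but not needed given Remark~\ref{sca}.
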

\begin{proof}
Take $x_1=1$ and $x_i=0$ for $i=2,3,4.$ Then, the equality
$$x_1+A_j x_2+A_k x_3+A_jA_kx_4=0$$
is never satisfied for $P(j,k)\in{\mathbb N}^2.$ By Remark \ref{sca}, the result holds.
\end{proof}
It should be mentioned that this observation is valid not only for $\textnormal{Unif}\{a,b,c\},$ but also for any distribution with finite support.
\begin{proposition}\label{prop1}
For any $P_0(j_0,k_0)\in{\mathbb N}^2,$ there exist $X,Y\sim \textnormal{Unif}\{a,b,c\}$ such that ${\mathcal U}(X,Y)=\{P_0\}.$
\end{proposition}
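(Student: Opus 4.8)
The plan is to exhibit a nontrivial solution $x=(x_1,x_2,x_3,x_4)$ of the homogeneous system such that equation \eqref{syst} holds at $(j,k)=(j_0,k_0)$ and fails at every other $(j,k)\in{\mathbb N}^2$; by Remark~\ref{sca} such a solution rescales to a genuine joint pmf of the form in Table~\ref{lab:pmf}, and the resulting $X,Y\sim\textnormal{Unif}\{a,b,c\}$ then satisfy ${\mathcal U}(X,Y)=\{P_0\}$. Write $L_{j,k}$ for the linear functional on ${\mathbb R}^4$ given by $L_{j,k}(x)=x_1+A_jx_2+A_kx_3+A_jA_kx_4$, so that \eqref{syst} at $(j,k)$ reads $L_{j,k}(x)=0$. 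Since the covector $(1,A_{j_0},A_{k_0},A_{j_0}A_{k_0})$ representing $L_{j_0,k_0}$ is nonzero, $V:=\ker L_{j_0,k_0}$ is a three-dimensional subspace of ${\mathbb R}^4$, and it suffices to find $x\in V$ with $L_{j,k}(x)\neq 0$ for all $(j,k)\neq(j_0,k_0)$.

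The key point is that the restriction $L_{j,k}\big|_V$ is a \emph{nonzero} functional whenever $(j,k)\neq(j_0,k_0)$. Indeed, $L_{j,k}\big|_V\equiv 0$ would force the covector $(1,A_j,A_k,A_jA_k)$ to be a scalar multiple of $(1,A_{j_0},A_{k_0},A_{j_0}A_{k_0})$; comparing first coordinates, the scalar equals $1$, whence $A_j=A_{j_0}$ and $A_k=A_{k_0}$. As $\{A_j\}$ is strictly decreasing, hence injective, this yields $(j,k)=(j_0,k_0)$, a contradiction. Therefore, for every $(j,k)\neq(j_0,k_0)$ the set $N_{j,k}:=\{x\in V:\ L_{j,k}(x)=0\}$ is a two-dimensional, in particular proper, subspace of $V$.

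Now $\bigcup_{(j,k)\neq(j_0,k_0)}N_{j,k}$ is a countable union of proper subspaces of the three-dimensional space $V$, so it cannot exhaust $V$ (each $N_{j,k}$ has three-dimensional Lebesgue measure zero in $V$, or one may invoke the Baire category theorem). Choosing any $x\in V$ outside this union gives $x\neq 0$, since the origin lies in every $N_{j,k}$, together with $L_{j_0,k_0}(x)=0$ and $L_{j,k}(x)\neq 0$ for all other $(j,k)$. Rescaling $x$ as in Remark~\ref{sca} so that all entries of Table~\ref{lab:pmf} are non-negative produces the desired random vector.

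I do not expect a genuine obstacle here; the only place that needs care is the linear-independence step in the second paragraph, where injectivity (equivalently, strict monotonicity) of $\{A_j\}$ is exactly what guarantees that excluding the single point $(j_0,k_0)$ leaves every other $N_{j,k}$ proper. As an alternative to the counting argument, one could attempt an explicit construction by taking the zero set of \eqref{syst} to be a rectangular hyperbola $(A_j-\alpha)(A_k-\beta)=\gamma$ through $(A_{j_0},A_{k_0})$ and tuning $\alpha,\beta,\gamma$ so that no other lattice point $(A_j,A_k)$ lies on it; but that route essentially reduces to separating the countably many products $(A_j-\alpha)(A_k-\beta)$, so the category argument above seems cleaner.
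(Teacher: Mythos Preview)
Your proof is correct but proceeds differently from the paper's. The paper writes down an explicit witness, namely $(x_1,x_2,x_3,x_4)=(A_{j_0}+\sqrt{2}\,A_{k_0},\,-1,\,-\sqrt{2},\,0)$, for which $L_{j,k}(x)=(A_{j_0}-A_j)+\sqrt{2}\,(A_{k_0}-A_k)$, and then argues from the strict monotonicity of $\{A_j\}$ that this vanishes only at $(j_0,k_0)$. You instead argue non-constructively inside the three-dimensional kernel $V=\ker L_{j_0,k_0}$: each $N_{j,k}$ is a proper subspace, and a countable union of these cannot exhaust $V$. The paper's route is shorter and hands you a concrete solution; your Baire/measure argument is less explicit but has the virtue of using nothing about the $A_j$ beyond injectivity, so it goes through uniformly for all $0<a<b<c$ without any side arithmetic considerations. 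In effect, the paper's choice of $\sqrt{2}$ is one particular point (taken along the slice $x_4=0$, $x_2=-1$) in the complement you prove is nonempty.
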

\begin{proof}
We have to show that there is a 4-tuple $(x_1, x_2, x_3, x_4)$ so that
\begin{align*}
x_1 + A_{j}x_2 + A_{k}x_3 + A_{j}A_{k}x_4=0 \Leftrightarrow (j,k)=(j_0, k_0).
\end{align*}
Clearly, for the 4-tuple $(A_{j_0}+\sqrt{2}A_{k_0},-1,-\sqrt{2},0),$ one has
\begin{align}\label{jk0}
x_1+A_jx_2 + A_kx_3 + A_{j}A_{k}x_4=A_{j_0}-A_j+\sqrt{2}(A_{k_0}-A_k).
\end{align}
As $\{A_j\}$ is strictly decreasing, \eqref{jk0} vanishes if and only if $P=P_0.$
\end{proof}

\begin{corollary} The uncorrelatedness of $X$ and $Y$ does not imply their independence.
\end{corollary}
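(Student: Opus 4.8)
The plan is to obtain this corollary as an immediate consequence of Proposition~\ref{prop1}, so the ``proof'' is really just the observation of what has already been established. Recall that $X$ and $Y$ are uncorrelated precisely when $(1,1)\in\mathcal U(X,Y)$, while independence forces $\mathcal U(X,Y)=\mathbb N^2$. Thus it suffices to exhibit a random vector $(X,Y)$ for which $(1,1)\in\mathcal U(X,Y)$ yet $\mathcal U(X,Y)\neq\mathbb N^2$.

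Concretely, I would apply Proposition~\ref{prop1} with the choice $P_0=(1,1)$. This yields $X,Y\sim\textnormal{Unif}\{a,b,c\}$ with $\mathcal U(X,Y)=\{(1,1)\}$. For such a pair, $(1,1)\in\mathcal U(X,Y)$, so $X$ and $Y$ are uncorrelated; on the other hand $\mathcal U(X,Y)=\{(1,1)\}\subsetneq\mathbb N^2$ (for instance $(2,1)\notin\mathcal U(X,Y)$), so $X$ and $Y$ are not independent. This proves the statement.

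There is essentially no obstacle here: the work was done in Proposition~\ref{prop1}, and the only thing to be careful about is the logical translation ``uncorrelated $\iff(1,1)\in\mathcal U$'' and ``independent $\implies\mathcal U=\mathbb N^2$'', both of which were recorded in the introduction. If one prefers an even more self-contained argument, one could instead simply cite the 4-tuple $\bigl(A_1+\sqrt2A_1,-1,-\sqrt2,0\bigr)$ from the proof of Proposition~\ref{prop1} and verify directly via \eqref{syst} that it satisfies the uncorrelatedness equation only at $(j,k)=(1,1)$; but invoking the proposition is cleaner.
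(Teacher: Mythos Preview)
Your proposal is correct and mirrors exactly the paper's intent: the corollary is stated immediately after Proposition~\ref{prop1} with no separate proof, so the implied argument is precisely to take $P_0=(1,1)$ there and observe that $\mathcal U(X,Y)=\{(1,1)\}$ gives uncorrelated but dependent $X,Y$. Your logical unpacking of ``uncorrelated $\Leftrightarrow (1,1)\in\mathcal U$'' and ``independent $\Rightarrow \mathcal U=\mathbb N^2$'' is exactly what is needed.
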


As for two-point sets, not all such sets can be uncorrelatedness sets. The next statement shows that a two-point set can serve as an uncorrelatedness set if and only if the points are not on the same horizontal or vertical line. In the latter case, the entire line is contained in ${\mathcal U}(X,Y).$

\begin{theorem} \label{thmhv}
Let $X,Y\sim \textnormal{Unif}\{a,b,c\}$ and $\{P_1, P_2\}\subseteq {\mathcal U}(X,Y),$ where $P_1(j_1, k_1)\neq P_2(j_2, k_2).$
\begin{enumerate}
\item [(i)] If $j_1=j_2=j,$ then $v_j:=\{(j,m):m\in{\mathbb N}\} \subseteq {\mathcal U}(X,Y).$
\item [(ii)] If $k_1=k_2=k,$ then $h_k:=\{(n,k):n\in{\mathbb N}\} \subseteq {\mathcal U}(X,Y).$
\item [(iii)] If $j_1\neq j_2$ and $k_1\neq k_2,$ then there exist $X,Y\sim \textnormal{Unif}\{a,b,c\}$ such that ${\mathcal U}(X,Y)=\{P_1, P_2\}.$
\end{enumerate}
\end{theorem}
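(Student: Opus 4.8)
The strategy is to look directly at the defining linear relation \eqref{syst}, viewed as a single scalar equation in the four unknowns $x_1,x_2,x_3,x_4$ parametrized by $(j,k)$. For part (i), suppose $(j,k_1)$ and $(j,k_2)$ both satisfy \eqref{syst} with $k_1\neq k_2$. Subtracting the two instances eliminates $x_1$ and $A_jx_2$ and yields $(A_{k_1}-A_{k_2})(x_3+A_jx_4)=0$; since $\{A_m\}$ is strictly decreasing, $A_{k_1}\neq A_{k_2}$, so $x_3+A_jx_4=0$. Plugging this back into the $(j,k_1)$ instance gives $x_1+A_jx_2=0$ as well. Hence for this particular $j$ we have $x_1+A_jx_2=0$ and $x_3+A_jx_4=0$, and consequently $x_1+A_jx_2+A_mx_3+A_mA_jx_4 = (x_1+A_jx_2)+A_m(x_3+A_jx_4)=0$ for every $m\in{\mathbb N}$. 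This is exactly $v_j\subseteq{\mathcal U}(X,Y)$. Part (ii) is identical after interchanging the roles of $j$ and $k$ (or, more cheaply, it follows from part (i) together with the symmetry noted in Remark \ref{rem:W}).

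For part (iii), I must exhibit a non-trivial 4-tuple $(x_1,x_2,x_3,x_4)$ for which \eqref{syst} holds precisely at $P_1$ and $P_2$ and nowhere else. The natural guess, modeled on the rank-one factorization seen in part (i), is to force the relation to factor as
\begin{align*}
x_1+A_jx_2+A_kx_3+A_jA_kx_4=(A_j-A_{j_1})(A_k-A_{k_1})+\lambda\,(A_j-A_{j_2})(A_k-A_{k_2}),
\end{align*}
for a suitable scalar $\lambda$. Expanding the right-hand side identifies $x_1,x_2,x_3,x_4$ as explicit polynomials in $A_{j_1},A_{k_1},A_{j_2},A_{k_2},\lambda$, so the tuple is non-trivial as long as not all coefficients vanish, which is clear. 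This expression visibly vanishes at $P_1$ and $P_2$ for any $\lambda$; the task is to choose $\lambda$ so that it vanishes \emph{only} there. Setting $u=A_j-A_{j_1}$, $v=A_j-A_{j_2}$ (so $u-v=A_{j_2}-A_{j_1}\neq 0$ is a fixed nonzero constant) and similarly on the $k$-side, vanishing means $uv' = -\lambda v u'$ where $u',v'$ are the $k$-analogues; one checks this can hold with $(j,k)\notin\{P_1,P_2\}$ only for at most finitely many "bad" values of $\lambda$ (each bad $(j,k)$ imposes one equation on $\lambda$), so almost every $\lambda$ works. Making this counting argument clean — showing the set of forbidden $\lambda$ is at most countable and, better, actually finite because the $A_m$ accumulate only at a single limit — is the one place that needs a little care.

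The main obstacle is precisely this last genericity step in (iii): one must rule out an \emph{infinite} family of coincidental extra solutions as $\lambda$ varies. I expect the cleanest route is: fix $\lambda$; if $(j,k)\neq P_1,P_2$ satisfies the factored identity, then neither factor-term is zero, so $A_k-A_{k_1}$ and $A_k-A_{k_2}$ are both nonzero and we may solve for $A_j$ as an explicit Möbius function of $A_k$ with coefficients depending on $\lambda$; since $j\mapsto A_j$ is injective with values in a set whose only accumulation point is $1$ (as $A_j=(c^j-a^j)/(c^j-b^j)\to 1$ is false — in fact $A_j\to\infty$ when $c>b$, so let me instead just note $\{A_j\}$ is an infinite set of distinct reals), a Möbius map carries each $A_k$ to at most one candidate $A_j$, and one then shows that for all but finitely many $\lambda$ this candidate is never actually of the form $A_j$ with $(j,k)$ an admissible lattice point. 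A perhaps slicker alternative, which I would try first, is to pick $\lambda$ transcendental over the field generated by $\{A_m:m\in{\mathbb N}\}$ (possible since that field is countable): then any algebraic relation forced by an extra solution would be a polynomial identity in $\lambda$ with coefficients in that field, forcing the polynomial to vanish identically, which is easily seen to be impossible unless $(j,k)\in\{P_1,P_2\}$. Either way the remaining verification that the non-degeneracy of the $x_i$'s holds and that \eqref{syst} fails off $\{P_1,P_2\}$ is routine, and Remark \ref{sca} then lets us rescale to a genuine probability mass function.
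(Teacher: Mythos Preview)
Your arguments for (i) and (ii) are correct and essentially identical to the paper's.

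For (iii), however, your explicit ansatz is wrong as written. The expression
\[
(A_j-A_{j_1})(A_k-A_{k_1})+\lambda\,(A_j-A_{j_2})(A_k-A_{k_2})
\]
does \emph{not} vanish at $P_1=(j_1,k_1)$: the first product is zero there, but the second equals $\lambda(A_{j_1}-A_{j_2})(A_{k_1}-A_{k_2})\neq 0$ for $\lambda\neq 0$, since $j_1\neq j_2$, $k_1\neq k_2$ and $\{A_m\}$ is strictly monotone. The same failure occurs at $P_2$. What you presumably intended is the \emph{crossed} combination
\[
(A_j-A_{j_1})(A_k-A_{k_2})+\lambda\,(A_j-A_{j_2})(A_k-A_{k_1}),
\]
which does vanish at both $P_1$ and $P_2$ for every $\lambda$. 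With this fix your plan goes through: for $(j,k)\notin\{P_1,P_2\}$ with $j\notin\{j_1,j_2\}$, $k\notin\{k_1,k_2\}$ the vanishing pins down a single bad value of $\lambda$, and the boundary cases $j\in\{j_1,j_2\}$ or $k\in\{k_1,k_2\}$ force only $\lambda=0$ (or are never zero). Hence the forbidden set of $\lambda$'s is countable, and any $\lambda$ outside it (in particular your transcendental choice) works. Incidentally, your aside about the limit is backwards: $A_j=(1-(a/c)^j)/(1-(b/c)^j)\to 1$, not $\infty$; but this plays no role.

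Once corrected, your approach to (iii) is really the same idea as the paper's, just in explicit coordinates. The paper observes that the two equations for $P_1,P_2$ cut out a $2$-dimensional subspace $V\subset\mathbb{R}^4$, checks that adding the equation for any third point $P$ drops the solution set to a line $\ell_P\subset V$ (this rank-$3$ verification is exactly your ``one bad $\lambda$ per $P$'' computation), and then notes that a countable union of lines cannot exhaust a plane. Your parametrized family is simply an affine line in $V$ meeting each $\ell_P$ in at most one point, so the two arguments are equivalent; the paper's formulation is a bit more abstract, yours more constructive.
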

\begin{proof}
\begin{enumerate}
\item [(i)]
Let $(x_1, x_2, x_3, x_4)$ be a solution of
\begin{align*}
\begin{array}{l}
x_1 + A_{j}x_2 + A_{k_1}x_3 + A_{j}A_{k_1}x_4 =0 \\
x_1 + A_{j}x_2 + A_{k_2}x_3 + A_{j}A_{k_2}x_4 =0.
\end{array}
\end{align*}
Then, 
$x_1+A_j x_2=0$ and $x_3+A_jx_4=0.$
Thus, for any $m\in{\mathbb N},$
one has
$$
x_1 + A_{j}x_2 + A_{m}x_3 + A_{j}A_{m}x_4 = x_1+A_jx_2+A_m(x_3+A_jx_4)=0
$$
which completes the proof.
\item [(ii)] The same as (i). 
\item [(iii)]
Equivalently, it has to be proved that there exists $(x_1, x_2, x_3, x_4)$ such that
\begin{align}\label{jk12}
\begin{array}{l}
x_1 + A_{j_1}x_2 + A_{k_1}x_3 + A_{j_1}A_{k_1}x_4=0 \\
x_1 + A_{j_2}x_2 + A_{k_2}x_3 + A_{j_2}A_{k_2}x_4=0
\end{array}
\end{align}
and 
\begin{align*}
x_1 + A_{j}x_2 + A_{k}x_3 + A_{j}A_{k}x_4\neq 0
\end{align*}
for all $P\notin \{P_1, P_2\}.$
System \eqref{jk12} has a 2-parameter family of solutions forming a two-dimensional subspace $V\subseteq {\mathbb R}^4.$
As the sequence $\{A_{j}\}$ is decreasing, it can be shown that 
$$
\text{rank}\begin{bmatrix}
1 & A_{j_1} & A_{k_1} & A_{j_1}A_{k_1} \\
1 & A_{j_2} & A_{k_2} & A_{j_2}A_{k_2} \\
1 & A_{j  } & A_{k  } & A_{j  }A_{k  }
\end{bmatrix} = 3,
$$
whence the system
\begin{align*}
x_1 + A_{j_1}x_2 + A_{k_1}x_3 + A_{j_1}A_{k_1}x_4 &=0 \\
x_1 + A_{j_2}x_2 + A_{k_2}x_3 + A_{j_2}A_{k_2}x_4 &=0 \\
x_1 + A_{j  }x_2 + A_{k  }x_3 + A_{j  }A_{k  }x_4 &=0 
\end{align*}
has a one-parameter set of solutions $\ell_P \subseteq V.$
Since there is only a countable number of straight lines $\ell_P,$ one has
$
\bigcup_{P\not\in U} \ell_P \neq V.
$
As a result, there is $(x_1, x_2, x_3, x_4)\in V\setminus \bigcup_{P\not\in U}.$
\end{enumerate}
\end{proof}

\begin{corollary}\label{cor:lp}
If $v_j \cup \{(l,k)\}\subseteq {\mathcal U}(X,Y),$ then $v_j \cup h_k \subseteq {\mathcal U}(X,Y).$ The same is true for a horizontal line and a point.
\end{corollary}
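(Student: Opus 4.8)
The plan is to identify the coefficient vector $(x_1,x_2,x_3,x_4)$ that realizes the hypothesis and then read off the whole uncorrelatedness set from it. First, since $v_j\subseteq{\mathcal U}(X,Y)$, equation \eqref{syst} holds with $(j,m)$ for every $m\in{\mathbb N}$. Writing its left-hand side as $(x_1+A_jx_2)+A_m(x_3+A_jx_4)$ and recalling that the strictly decreasing sequence $\{A_m\}$ takes infinitely many values, I would conclude $x_1+A_jx_2=0$ and $x_3+A_jx_4=0$.

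Next I bring in the extra point $(l,k)$. Since $(l,k)$ is a point not lying on $v_j$, we have $l\neq j$, so $A_l\neq A_j$; substituting $x_1=-A_jx_2$ and $x_3=-A_jx_4$ into the relation $x_1+A_lx_2+A_kx_3+A_lA_kx_4=0$ yields $(A_l-A_j)(x_2+A_kx_4)=0$, hence $x_2=-A_kx_4$ and $x_1=A_jA_kx_4$. Thus
$$(x_1,x_2,x_3,x_4)=x_4\,(A_jA_k,\,-A_k,\,-A_j,\,1).$$

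With this vector in hand, for an arbitrary pair $(p,q)\in{\mathbb N}^2$ the left-hand side of \eqref{syst} factors as
$$x_1+A_px_2+A_qx_3+A_pA_qx_4=x_4\,(A_j-A_p)(A_k-A_q),$$
which is routine to verify. Since $\{A_n\}$ is injective, this expression vanishes exactly when $p=j$ or $q=k$ (and it vanishes identically when $x_4=0$, the trivial case ${\mathcal U}(X,Y)={\mathbb N}^2$). In either case $v_j\cup h_k\subseteq{\mathcal U}(X,Y)$, as claimed. The variant with a horizontal line and an additional point follows by interchanging the roles of $j$ and $k$ (equivalently $x_2$ and $x_3$), which is legitimate by Remark \ref{rem:W}. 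I do not expect a genuine obstacle here: the only steps needing a moment of care are the passage from the infinitely many identities along $v_j$ to the two scalar equations, and the implicit observation that $(l,k)$ being a \emph{new} point forces $l\neq j$.
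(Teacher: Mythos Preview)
Your proof is correct. The paper gives no explicit argument for this corollary; it is meant to drop out of Theorem~\ref{thmhv}(ii) in one line: since $(j,k)\in v_j\subseteq{\mathcal U}(X,Y)$ and $(l,k)\in{\mathcal U}(X,Y)$ are two distinct points with the same second coordinate, the whole horizontal line $h_k$ lies in ${\mathcal U}(X,Y)$. You instead solve directly for the coefficient vector and obtain the factorization $x_4(A_j-A_p)(A_k-A_q)$, which is a genuinely different and more hands-on route. Your approach actually yields more than the corollary asks: it shows that ${\mathcal U}(X,Y)$ is exactly $v_j\cup h_k$ when $x_4\neq 0$ and ${\mathbb N}^2$ when $x_4=0$, anticipating the maximality discussion that the paper only reaches after Theorem~\ref{thm3p}. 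The paper's route is shorter given the preceding theorem; yours is self-contained and sharper.
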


Now, let us examine feasible uncorrelatedness sets containing horizontal or vertical lines. The statement below elaborates assertions (i) and (ii) of Theorem \ref{thmhv} by showing that any single horizontal (vertical) line can constitute an uncorrelatedness set. However, if an uncorrelatedness set contains a horizontal (vertical) line $\ell$ and a point $P$ outside of that line, then it necessarily contains the line through $P$ perpendicular to $\ell.$ Further, the union of a horizontal and a vertical line can form an uncorrelatedness set.

\begin{theorem} \label{thm3p}
\begin{enumerate} 
\item [(i)] For each $j\in{\mathbb N},$ there exist $X,Y\sim \textnormal{Unif}\{a,b,c\}$ such that 
${\mathcal U}(X,Y)=v_j.$
\item [(ii)] For each $k\in{\mathbb N},$ there exist $X,Y\sim \textnormal{Unif}\{a,b,c\}$ such that 
${\mathcal U}(X,Y)=h_k.$
\item [(iii)] For each $(j,k)\in{\mathbb N}^2,$ there exist $X,Y\sim \textnormal{Unif}\{a,b,c\}$ such that 
${\mathcal U}(X,Y)=v_j \cup h_k.$
\end{enumerate}
\end{theorem}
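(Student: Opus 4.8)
The plan is to produce, for each part, an explicit $4$-tuple $(x_1,x_2,x_3,x_4)$ whose solution set to \eqref{syst} is exactly the prescribed set, and then invoke Remark \ref{sca} to rescale into a legitimate probability mass function. For part (i), I want $x_1+A_jx_2+A_kx_3+A_jA_kx_4=0$ to hold precisely when the first coordinate equals $j$, regardless of the second. The natural choice is to make the expression factor as $(A_j-A_n)\cdot(\text{something independent of }n)$ when evaluated at a point $(n,m)$; concretely, taking $x_3=x_4=0$ reduces \eqref{syst} to $x_1+A_nx_2=0$, which, since $\{A_n\}$ is strictly decreasing, holds for exactly one value of $n$. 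So $(x_1,x_2,x_3,x_4)=(-A_j,1,0,0)$ gives ${\mathcal U}(X,Y)=v_j$. Part (ii) is symmetric: take $(x_1,x_2,x_3,x_4)=(-A_k,0,1,0)$, or alternatively just apply Remark \ref{rem:W} to part (i).

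Part (iii) is the substantive one. I want the quadratic form $x_1+A_nx_2+A_mx_3+A_nA_mx_4$ to vanish exactly when $n=j$ or $m=k$. The key observation is that the left-hand side of \eqref{syst} factors as a product of a linear form in $A_n$ and a linear form in $A_m$: namely $(x_2A_n + \alpha)(x_4 A_m + \beta)/x_4$ when the coefficients are in ``rank-one'' position, i.e.\ when $x_1x_4=x_2x_3$. Under that constraint the expression becomes proportional to $(A_n-A_j)(A_m-A_k)$ after choosing the linear factors to have roots $A_j$ and $A_k$ respectively. Explicitly, I expect $(x_1,x_2,x_3,x_4)=(A_jA_k,\,-A_k,\,-A_j,\,1)$ to work, since then
\begin{align*}
x_1+A_nx_2+A_mx_3+A_nA_mx_4 = A_jA_k - A_nA_k - A_jA_m + A_nA_m = (A_n-A_j)(A_m-A_k),
\end{align*}
which vanishes if and only if $A_n=A_j$ or $A_m=A_k$, that is, if and only if $n=j$ or $m=k$ by strict monotonicity of $\{A_n\}$. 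Hence ${\mathcal U}(X,Y)=v_j\cup h_k$.

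The remaining point in each case is to confirm that a suitable rescaling of the non-trivial solution yields non-negative entries in Table \ref{lab:pmf}; this is exactly what Remark \ref{sca} guarantees, so no further work is needed there, although one could alternatively just exhibit a small enough scalar multiple directly. I expect the only genuine obstacle to be spotting the rank-one / factorization trick for part (iii) --- once the product form $(A_n-A_j)(A_m-A_k)$ is in hand, everything else is immediate from the strict monotonicity of $\{A_n\}$ already noted after \eqref{syst}. It is worth remarking that this same construction also re-proves parts (i), (ii) of Theorem \ref{thmhv} and is consistent with Corollary \ref{cor:lp}, since the $4$-tuple for $v_j\cup h_k$ indeed contains both lines and nothing else.
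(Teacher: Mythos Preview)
Your proof is correct. Parts (i) and (ii) follow the same constructive pattern as the paper, just with different (equally valid) $4$-tuples: the paper chooses $(0,0,-A_j,1)$, which makes the expression equal to $A_m(A_n-A_j)$, while you choose $(-A_j,1,0,0)$, giving simply $A_n-A_j$.

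For part (iii), however, your approach is genuinely different from the paper's and, in fact, cleaner. The paper does \emph{not} exhibit an explicit $4$-tuple; instead it argues indirectly: take any non-trivial solution of the three equations corresponding to $(j,k)$, $(j,l)$, $(m,k)$ (with $l\neq k$, $m\neq j$), note via Theorem~\ref{thmhv} that this forces $v_j\cup h_k\subseteq{\mathcal U}(X,Y)$, and then show by contradiction that no further point can lie in ${\mathcal U}(X,Y)$ --- otherwise, repeated application of Theorem~\ref{thmhv} would yield ${\mathcal U}(X,Y)={\mathbb N}^2$, forcing the trivial solution. Your rank-one factorization $(A_n-A_j)(A_m-A_k)$ sidesteps all of that: it simultaneously exhibits the solution, verifies containment of $v_j\cup h_k$, and excludes every other point in one line. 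The paper's route has the mild advantage of establishing, as a by-product, the maximality statement recorded in the subsequent corollary (any extra point forces ${\mathcal U}={\mathbb N}^2$), but your construction recovers that instantly as well, since the solution space for $v_j\cup h_k$ is visibly one-dimensional and your $4$-tuple spans it.
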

\begin{proof}
\begin{enumerate}
\item [(i)] Let $x_1=x_2=0,$ $x_3=-A_j$ and $x_4=1.$ Then,
the system
\begin{align*}
x_1 + A_{j}x_2 + A_{k}x_3 + A_{j}A_{k}x_4&=0 \\
x_1 + A_{j}x_2 + A_{l}x_3 + A_{j}A_{l}x_4&=0
\end{align*}
is satisfied for all $(j,k), (j,l)\in v_j.$ However, when $m\neq j,$ one has
\begin{align*}
x_1 + A_{m}x_2 + A_{n}x_3 + A_{m}A_{n}x_4&=A_n(A_m-A_j) \neq 0.
\end{align*}
So, the line $v_j:=\{(j, k):k\in{\mathbb N}\}$ is an uncorrelatedness set.
\item [(ii)] The same as (i).
\item [(iii)]
Take $(m,l)\notin v_j \cup h_k.$
Assume that $(x_1, x_2, x_3, x_4)$  is a non-trivial solution of 
\begin{align*}
x_1 + A_{j}x_2 + A_{k}x_3 + A_{j}A_{k}x_4&=0\\
x_1 + A_{j}x_2 + A_{l}x_3 + A_{j}A_{l}x_4&=0\\
x_1 + A_{m}x_2 + A_{k}x_3 + A_{m}A_{k}x_4&=0.
\end{align*}
Then, by Theorem \ref{thmhv} (i) and (ii), both $v_m$ and $h_l$ belong to ${\mathcal U}(X,Y).$ By the same token, it is easy to see that 
${\mathcal U}(X,Y)={\mathbb N}^2,$ which contradicts the fact that $(x_1, x_2, x_3, x_4)$ is a non-trivial solution.
\end{enumerate}
\end{proof}
It is seen from the proof that an uncorrelatedness set of the form $v_j \cup h_k$ may be regarded as maximal in the sense that, if it contains any other point, then it equals ${\mathbb N}^2.$ This fact can be stated as:

\begin{corollary} Let $(j,k)\in{\mathbb N}^2$ be given and $P\notin v_j \cup h_k.$
If $ v_j \cup h_k \cup \{P\} \subseteq {\mathcal U}(X,Y),$ then ${\mathcal U}(X,Y)={\mathbb N}^2$ and, thus, $X$ and $Y$ are independent.
\end{corollary}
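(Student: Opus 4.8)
The plan is to reduce the statement to a rank computation for a $4\times 4$ Vandermonde-type matrix and then invoke the earlier structural results. Suppose $v_j\cup h_k\cup\{P\}\subseteq{\mathcal U}(X,Y)$ with $P=(m,l)\notin v_j\cup h_k$, so $m\ne j$ and $l\ne k$. I would pick a convenient finite selection of points from $v_j\cup h_k\cup\{P\}$ to pin down the solution $(x_1,x_2,x_3,x_4)$. A natural choice: take two distinct points $(j,k_1),(j,k_2)$ on $v_j$, two distinct points $(n_1,k),(n_2,k)$ on $h_k$ (chosen disjoint from the vertical line so that all indices involved are under control), and the extra point $(m,l)$. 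Each point contributes one equation of the form \eqref{syst}, and the claim will follow once I show the only solution consistent with \emph{all} of $v_j\cup h_k\cup\{P\}$ is the one whose uncorrelatedness set is all of ${\mathbb N}^2$.

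The key steps, in order, are as follows. First, by Theorem \ref{thmhv}(i) applied to the two points on $v_j$, any solution satisfies $x_1+A_jx_2=0$ and $x_3+A_jx_4=0$; symmetrically, Theorem \ref{thmhv}(ii) applied to the two points on $h_k$ gives $x_1+A_kx_3=0$ and $x_2+A_kx_4=0$. So write everything in terms of $x_4$: namely $x_2=-A_kx_4$, $x_3=-A_jx_4$, and $x_1+A_jx_2=0$ forces $x_1=A_jA_kx_4$. One checks this is consistent with $x_1+A_kx_3=0$ as well, so the solution space cut out by $v_j\cup h_k$ is exactly the one-dimensional line $(x_1,x_2,x_3,x_4)=x_4(A_jA_k,-A_k,-A_j,1)$. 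Second, impose the equation coming from $P=(m,l)$: substituting gives
\begin{align*}
A_jA_kx_4 - A_mA_kx_4 - A_lA_jx_4 + A_mA_lx_4 = x_4(A_j-A_m)(A_k-A_l).
\end{align*}
Third, since $\{A_i\}$ is strictly decreasing (a consequence of $0<a<b<c$) and $m\ne j$, $l\ne k$, the factors $A_j-A_m$ and $A_k-A_l$ are both nonzero; hence the equation forces $x_4=0$, and then $x_1=x_2=x_3=0$ as well. Thus the only $4$-tuple satisfying all the constraints from $v_j\cup h_k\cup\{P\}$ is the trivial one — but the trivial solution satisfies \eqref{syst} for \emph{every} $(j,k)\in{\mathbb N}^2$, which is precisely the joint pmf with all $x_i=0$, i.e. $X$ and $Y$ independent and ${\mathcal U}(X,Y)={\mathbb N}^2$. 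Equivalently, and more in the spirit of the paper, one argues by contradiction exactly as in the proof of Theorem \ref{thm3p}(iii): a nontrivial solution cannot satisfy the three equations from $(j,k_1)$, $(j,k_2)$ and $(m,l)$ together — wait, one needs $h_k$ too — so the constraints are overdetermined for any nontrivial tuple, forcing ${\mathcal U}(X,Y)={\mathbb N}^2$.

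I expect the only real subtlety to be bookkeeping: making sure the chosen finite sample of points genuinely lies in $v_j\cup h_k\cup\{P\}$ and that the indices are pairwise distinct where the arguments of Theorem \ref{thmhv} require it (e.g. $k_1\ne k_2$, $n_1\ne n_2$, and the $h_k$-points not accidentally coinciding with the $v_j$-points). There is no genuine obstacle here — the heavy lifting was already done in Theorems \ref{thmhv} and \ref{thm3p}, and the final computation is the single two-factor identity $x_4(A_j-A_m)(A_k-A_l)=0$. The cleanest write-up simply says: by Theorem \ref{thm3p}(iii)'s proof the solution space of the equations coming from $v_j\cup h_k$ is one-dimensional, spanned by $(A_jA_k,-A_k,-A_j,1)$; the extra point $P$ kills it; hence the solution is trivial and $X,Y$ are independent.
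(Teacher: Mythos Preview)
Your argument is correct. You parametrize the solution space of \eqref{syst} subject to $v_j\cup h_k$ as the line $x_4(A_jA_k,-A_k,-A_j,1)$ and then kill it with the extra point $P=(m,l)$ via the clean factorization $x_4(A_j-A_m)(A_k-A_l)=0$; strict monotonicity of $\{A_i\}$ finishes the job.

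The paper does not argue this way. Its corollary is extracted from the proof of Theorem~\ref{thm3p}(iii), which is a line-propagation argument: from $(m,l)\in{\mathcal U}(X,Y)$ together with $(j,l)\in v_j$ one obtains two points on $h_l$, so $h_l\subseteq{\mathcal U}(X,Y)$ by Theorem~\ref{thmhv}(ii); symmetrically $(m,l)$ and $(m,k)\in h_k$ give $v_m\subseteq{\mathcal U}(X,Y)$. Having two distinct vertical lines $v_j,v_m$ in ${\mathcal U}(X,Y)$ then forces every horizontal line, hence ${\mathcal U}(X,Y)={\mathbb N}^2$, contradicting nontriviality of the solution. Your route is more computational and self-contained (the identity $(A_j-A_m)(A_k-A_l)$ makes the obstruction explicit), while the paper's route stays at the level of the incidence structure, repeatedly invoking Theorem~\ref{thmhv}. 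Both are short; yours would also prove Theorem~\ref{thm3p}(iii) directly, since the nontrivial point $x_4\neq 0$ on your line gives exactly ${\mathcal U}(X,Y)=v_j\cup h_k$. The self-critical aside in your last paragraph can be dropped --- the argument as written in your ``key steps'' is complete.
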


Along with Corollary \ref{cor:lp}, this implies that if ${\mathcal U}(X,Y)$ contains $v_j \cup \{(s,t)\},$ then either ${\mathcal U}(X,Y)=v_j\cup h_t$ or 
${\mathcal U}(X,Y)={\mathbb N}^2.$ Similarly, if ${\mathcal U}(X,Y)$ contains $h_k \cup \{(s,t)\},$ then 
either ${\mathcal U}(X,Y)=v_s\cup h_k$ or ${\mathcal U}(X,Y)={\mathbb N}^2.$

The situation with 3-point sets requires a more thorough investigation. The previous theorem shows that no 3 points on the set $v_j \cup h_k$ can form an uncorrelatedness set. In addition, the following holds.

\begin{lemma}\label{lem3} 
If $\{(j,j), (k,k), (l,l)\}\subseteq {\mathcal U}(X,Y),$ where $j,k$ and $l$ are mutually distint, then $L:=\{(m,m):m\in{\mathbb N}\} \subseteq {\mathcal U}(X,Y).$ Moreover, there exist $X, Y\sim \textnormal{Unif}\{a,b,c\}$ such that ${\mathcal U}(X,Y)=L.$
\end{lemma}
\begin{proof}
Consider
\begin{align*}
x_1 + A_j x_2 + A_j x_3 + A_j^2 x_4&=0  \\
x_1 + A_k x_2 + A_k x_3 + A_k^2 x_4&=0 \\
x_1 + A_l x_2 + A_l x_3 + A_l^2 x_4&=0,
\end{align*}
whose general solution is of the form $(0, x_2, -x_2, 0).$ Now, for $x_2\neq 0$ and $(m,n)\in {\mathbb N}^2,$ one has
$$
x_1+A_mx_2+A_nx_3+A_mA_nx_4=(A_m-A_n)x_2=0
$$
if and only if $m=n.$ This completes the proof.
\end{proof}
Next, let us consider a 3-point set where two points are symmetric in the line $L.$ Such a set is not an admissible uncorrelatedness set as the following result shows.

\begin{lemma}
If $\,{\mathcal U}(X,Y)$ contains three distinct points $(j,k),$ $(k,j)$ and $(m,n),$ then $(n,m)$ is also contained in ${\mathcal U}(X,Y).$
\end{lemma}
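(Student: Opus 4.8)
The plan is to exploit the bilinear structure of the uncorrelatedness condition \eqref{syst} in the variables $A_j$ and $A_k$: a pair of points related by transposition forces a symmetry on the underlying solution $(x_1,x_2,x_3,x_4)$, after which the third point automatically drags its mirror image along.

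First I would observe that, since the three points are distinct, in particular $(j,k)\neq(k,j)$, so $j\neq k$. Writing \eqref{syst} for $(j,k)$ and for $(k,j)$ and subtracting, the $x_1$ and $A_jA_kx_4$ terms cancel and one is left with $(A_j-A_k)(x_2-x_3)=0$. Since $\{A_n\}$ is strictly decreasing and $j\neq k$, we have $A_j\neq A_k$, hence $x_2=x_3$.

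Substituting $x_3=x_2$ into \eqref{syst} for the third point $(m,n)$ gives
\[
x_1+(A_m+A_n)x_2+A_mA_nx_4=0,
\]
an expression invariant under the interchange $m\leftrightarrow n$. Therefore the same tuple $(x_1,x_2,x_3,x_4)$ also satisfies \eqref{syst} for $(n,m)$, that is, $(n,m)\in{\mathcal U}(X,Y)$, as claimed.

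The only point requiring care is invoking the distinctness hypothesis: it is needed precisely to guarantee $j\neq k$, so that $\{(j,k),(k,j)\}$ is a genuine off-diagonal transpose pair and the strict monotonicity of $\{A_n\}$ yields $x_2=x_3$. Once this identity is secured, the conclusion follows immediately from the symmetry of the resulting bilinear form, so there is essentially no obstacle to overcome.
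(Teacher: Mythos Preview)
Your proof is correct and follows essentially the same approach as the paper: from the equations for $(j,k)$ and $(k,j)$ deduce $x_2=x_3$, then observe that the equation for $(m,n)$ becomes symmetric in $m,n$. You are slightly more explicit than the paper in justifying $x_2=x_3$ via the subtraction and the strict monotonicity of $\{A_n\}$, but the argument is the same.
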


\begin{proof} Suppose that
\begin{align*}
x_1 + A_{j}x_2 + A_{k}x_3 + A_{j}A_{k}x_4&=0 \\
x_1 + A_{k}x_2 + A_{j}x_3 + A_{k}A_{j}x_4&=0 \\
x_1 + A_{m}x_2 + A_{n}x_3 + A_{m}A_{n}x_4&=0.
\end{align*}
From the first two equations, one concludes that $x_2=x_3.$ Hence, the third equation can be written as
$$
x_1 + A_{m}x_3 + A_{n}x_2 + A_{m}A_{n}x_4=0
$$
which means that $(n,m)\in {\mathcal U}(X,Y).$
\end{proof}

From this point on, for the sake of simplicity in calculations, we will take 
$a=\alpha,$ $b=\alpha \beta$ and $c=\alpha \beta^2$ with $\alpha>0,$ $\beta>1.$
Then $A_j=1+\beta^{-j},$ $j\in{\mathbb N}.$ As it is stated before, we are only interested in distinguishing the trivial and non-trivial solutions of \eqref{syst}. After setting $A_j=1+\beta^{-j},$ with the transformation $y_1=x_4,$ $y_2=x_3+x_4,$ $y_3=x_2+x_4,$ $y_4=x_1+x_2+x_3+x_4,$ the system \eqref{syst} becomes
\begin{align} \label{syst2}
y_1+\beta^j y_2 + \beta^k y_3 + \beta^{j+k} y_4=0.
\end{align}
Due to the nature of this transformation, the trivial and non-trivial solutions of \eqref{syst} and \eqref{syst2} correspond to each other. In other words, $y_i=0,$ $i=1,2,3,4$ if and only if $x_i=0,$ $i=1,2,3,4.$ 
Therefore, ${\mathcal A}$ is an uncorrelatedness set for $X$ and $Y$ if and only if system \eqref{syst2} is satisfied for $(j,k)\in {\mathcal A}$ and violated for $(j,k)\notin {\mathcal A}.$

Despite such discouraging outcomes in search of 3-point uncorrelatedness sets, the next theorem shows such sets actually exist.

\begin{theorem} \label{thm:3pt} There exist $X, Y\sim \textnormal{Unif}\{\alpha,\alpha\beta,\alpha\beta^2\}$ for which
${\mathcal U}(X,Y) = \{(1,3), (2,2), (3,1)\}.$
\end{theorem}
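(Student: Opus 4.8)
The plan is to pass to the transformed system \eqref{syst2}, namely $y_1+\beta^j y_2+\beta^k y_3+\beta^{j+k}y_4=0$, and to exploit the fact that the target set is exactly the anti-diagonal $\{(j,k)\in{\mathbb N}^2:j+k=4\}$. Concretely, I would look for a solution supported on the ``corner'' coordinates, i.e. with $y_2=y_3=0$, and try $y_4=1$, $y_1=-\beta^4$. With this choice the left-hand side of \eqref{syst2} collapses to $\beta^{j+k}-\beta^4$, which, since $\beta>1$, vanishes if and only if $j+k=4$, that is, precisely at $(1,3)$, $(2,2)$, $(3,1)$. This proves the statement modulo two routine verifications.

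The first check is that the four-tuple is a genuine non-trivial solution: $(y_1,y_2,y_3,y_4)=(-\beta^4,0,0,1)\neq 0$, and by the correspondence between solutions of \eqref{syst} and \eqref{syst2} noted just above the latter, the associated tuple $(x_1,x_2,x_3,x_4)=(1-\beta^4,\beta^4,\beta^4,-\beta^4)$ is non-trivial as well. The second check is that we actually obtain an admissible joint probability mass function; this is exactly what Remark \ref{sca} provides, since scaling the solution by a sufficiently small factor renders every entry of Table \ref{lab:pmf} of the form $\tfrac19+O(\varepsilon)>0$.

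If one prefers to derive rather than guess the distinguished solution, the same vector is forced. Imposing \eqref{syst2} at $(1,3)$, $(2,2)$, $(3,1)$ gives a homogeneous $3\times4$ system; subtracting the $(1,3)$ equation from the $(3,1)$ equation yields $(\beta^3-\beta)(y_2-y_3)=0$, hence $y_2=y_3$, and then subtracting the $(2,2)$ equation forces $\beta(\beta-1)^2 y_2=0$, so $y_2=y_3=0$ and $y_1=-\beta^4 y_4$. Since the coefficient matrix has rank $3$ (for instance, the minor on the first three columns equals $\beta^2(\beta-1)^3(\beta+1)\neq0$), the solution space is one-dimensional and spanned by $(-\beta^4,0,0,1)$.

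I do not expect a serious obstacle here: once the system is written in the multiplicative form \eqref{syst2}, the anti-diagonal structure makes the construction transparent. The only mild points to be careful about are confirming that the chosen solution does not vanish at additional lattice points --- which the closed form $\beta^{j+k}-\beta^4$ settles at once --- and that the resulting table can be converted into a bona fide distribution, which Remark \ref{sca} guarantees.
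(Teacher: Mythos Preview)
Your proof is correct and follows essentially the same approach as the paper: both pass to the transformed system \eqref{syst2}, identify the one-parameter family of solutions $y_2=y_3=0$, $y_1=-\beta^4 y_4$, and verify that such a non-trivial solution satisfies \eqref{syst2} precisely when $j+k=4$. Your version includes the extra (but inessential) details of deriving the solution explicitly, computing the corresponding $x$-tuple, and exhibiting a nonvanishing $3\times3$ minor.
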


\begin{proof} The system
\begin{align*}
y_1 + \beta y_2 + \beta^3 y_3 + \beta^4y_4=0 \\
y_1 + \beta^2 y_2 + \beta^2 y_3 + \beta^4 y_4=0 \\
y_1 + \beta^3 y_2 + \beta y_3 + \beta^4 y_4=0
\end{align*}
has the general solution $y_1=\beta^4 \gamma, y_2=y_3=0, y_4=-\gamma,$ $\gamma\in {\mathbb R}.$ For $(j,k)\in{\mathbb N}^2,$ the 4-tuple $(\beta^4\gamma, 0,0,-\gamma), \gamma\neq 0,$ is a solution of 
$
y_1 + \beta^j y_2 + \beta^k y_3 + \beta^{j+k} y_4 =0
$  
if and only if $\beta^4-\beta^{j+k}=0,$ which means that $(j,k)\in\{(1,3), (2,2), (3,1)\}.$
\end{proof}

\begin{remark} Theorem \ref{thm:3pt} is true for $\textnormal{Unif}\{a,b,c\}.$ However, the proof is more cumbersome in the general case.
\end{remark}

\begin{theorem} \label{any}
For any integer $m \geq 2,$ there exist $X, Y\sim \textnormal{Unif}\{\alpha,\alpha\beta,\alpha\beta^2\}$ such that 
${\mathcal U}(X,Y) = \{(j,k)\in{\mathbb N}^2: j+k=m\}.$
\end{theorem}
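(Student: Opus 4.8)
The plan is to mimic the proof of Theorem \ref{thm:3pt}, generalizing from $m=4$ (the diagonal sum $j+k=4$ gave the three points $(1,3),(2,2),(3,1)$) to an arbitrary line $j+k=m$ with $m\ge 2$. The key observation is that the 4-tuple $(\beta^m\gamma,0,0,-\gamma)$ with $\gamma\ne 0$ satisfies the governing equation \eqref{syst2}, namely $y_1+\beta^j y_2+\beta^k y_3+\beta^{j+k}y_4=0$, precisely when $\beta^m-\beta^{j+k}=0$, i.e.\ exactly when $j+k=m$. Thus, if such a 4-tuple arises as a genuine (non-trivial) solution of the system built from enough points on the line $j+k=m$, we are done. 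So the structure of the argument is: (1) exhibit the candidate solution; (2) verify it vanishes on the line and only on the line; (3) justify that it is the \emph{general} non-trivial solution of the associated homogeneous system, so that no spurious extra points can sneak in.

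First I would fix any two distinct points on the line, say $(1,m-1)$ and $(m-1,1)$ when $m\ge 2$ (for $m=2$ the line $\{(1,1)\}$ is a single point and the statement is just that $X,Y$ can be uncorrelated without being independent, already covered by earlier results, so one may assume $m\ge 3$ and pick, e.g., $(1,m-1)$, $(2,m-2)$, $(m-1,1)$). Writing down equation \eqref{syst2} at these points yields a homogeneous linear system in $(y_1,y_2,y_3,y_4)$ whose coefficient matrix has rows $(1,\beta^{j},\beta^{k},\beta^{m})$ for the chosen pairs $(j,k)$ with $j+k=m$; note the last column is the constant $\beta^m$ on every such row. Subtracting rows kills $y_1$ and $y_4$ simultaneously, leaving relations among $y_2,y_3$ of the form $(\beta^{j}-\beta^{j'})y_2+(\beta^{k}-\beta^{k'})y_3=0$; since the $A_j$ (equivalently $\beta^{-j}$, equivalently $\beta^{j}$) are pairwise distinct, two such independent relations force $y_2=y_3=0$, and then the original equation gives $y_1=-\beta^m y_4$. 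Hence the solution space is exactly the line $\{(\beta^m\gamma,0,0,-\gamma):\gamma\in\mathbb R\}$, one-dimensional, so by Remark \ref{sca} we may rescale to get a valid probability mass function.

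Then I would close the loop: with $(y_1,y_2,y_3,y_4)=(\beta^m\gamma,0,0,-\gamma)$ and $\gamma\ne 0$ fixed, the value of the left-hand side of \eqref{syst2} at an arbitrary $(j,k)\in\mathbb N^2$ is $\gamma(\beta^m-\beta^{j+k})$, which is zero iff $j+k=m$. Translating back through the transformation $y_1=x_4$, $y_2=x_3+x_4$, $y_3=x_2+x_4$, $y_4=x_1+x_2+x_3+x_4$ (which preserves triviality/non-triviality), and recalling that $\mathcal A$ is an uncorrelatedness set iff \eqref{syst2} holds exactly on $\mathcal A$, we conclude $\mathcal U(X,Y)=\{(j,k):j+k=m\}$.

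I do not expect a serious obstacle here; the argument is essentially a clean generalization of Theorem \ref{thm:3pt}. The only point requiring mild care is the rank/uniqueness step for general $m$: one must make sure that for $m\ge 3$ there really are at least two points on the line $j+k=m$ with both coordinates in $\mathbb N$ (true, since $(1,m-1)$ and $(m-1,1)$ qualify and are distinct for $m\ge 3$) and that the resulting $2\times 4$ or $3\times 4$ system has solution space exactly the claimed line — i.e.\ that no additional degeneracy occurs. This follows from the strict monotonicity of $\{A_j\}$ (equivalently, the pairwise distinctness of $\{\beta^{j}\}$), exactly as invoked in the proof of Theorem \ref{thmhv}(iii). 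The boundary case $m=2$ should be disposed of by a one-line remark that $\{(1,1)\}$ is already an admissible uncorrelatedness set by Proposition \ref{prop1}.
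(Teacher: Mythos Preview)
Your proposal is correct and follows essentially the same approach as the paper: exhibit the 4-tuple $(\beta^m,0,0,-1)$ and check directly that \eqref{syst2} holds if and only if $j+k=m$. Your step~(3) is superfluous, however: once \emph{any} non-trivial 4-tuple satisfies \eqref{syst2} precisely on the desired set, Remark~\ref{sca} finishes the argument, so there is no need to show it is the \emph{general} solution of the system built from points on the line---and indeed that side claim fails for $m=3$, where only two lattice points $(1,2),(2,1)$ lie on the line and the solution space is two-dimensional, not one-dimensional (this does not affect the validity of your proof, since your step~(2) already rules out spurious points).
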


\begin{proof} For $m=2,3$ and $4,$ the statement follows from Proposition \ref{prop1}, Theorem \ref{thmhv} and Theorem \ref{thm:3pt}. Let $m\geqslant 5.$ The 4-tuple $(\beta^m,0,0,-1)$ satisfies the equation
$$
y_1 + \beta^j y_2 + \beta^k y_3 + \beta^{j+k} y_4 =0
$$  
if and only if $j+k=m.$
\end{proof}

\begin{remark}
There exist $X, Y\sim \textnormal{Unif}\{\alpha,\alpha\beta,\alpha\beta^2\}$ with the uncorrelatedness set of $X$ and $Y$ being of any given size $n\in{\mathbb N}_0.$ 
\end{remark}

\section{Some related determinants and their applications} \label{sec:det}

To further proceed, the determinants of special forms need to be investigated some of which are generalizations of the well-known Vandermonde determinant. See, for example \cite{prasolov}. To the best of our knowledge, the determinants presented here have not appeared in the literature.

To start with, let us introduce the notation used in this section. For a non-negative integer $k,$ set 
\begin{align*}
\sigma_k(x,y):=\sum_{i=0}^{k} x^{k-i}y^i = x^k+x^{k-1}y+\cdots+xy^{k-1}+y^k.
\end{align*}
Note that $\sigma_0(x,y)=1$ and $\sigma_k(x,y)=(x^{k+1}-y^{k+1})/(x-y).$ Hence, 
\begin{align}
x^k-y^k=(x-y)\sigma_{k-1}(x,y), \qquad k\geqslant 1. \label{fk}
\end{align}
\begin{lemma} For each $k\in{\mathbb N}_0,$ one has
\begin{align} \label{sig}
\sigma_k(x,y)-\sigma_k(x,z)=(y-z)\sum_{j=0}^{k-1} x^{k-j-1}\sigma_j(y,z).
\end{align}
\end{lemma}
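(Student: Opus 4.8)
The plan is to prove the identity \eqref{sig} by induction on $k$, using the three-term recursion that relates $\sigma_k$ to $\sigma_{k-1}$ together with the already-established factorization \eqref{fk}. First I would record the base case: for $k=0$ both sides vanish, since $\sigma_0(x,y)=\sigma_0(x,z)=1$ and the sum on the right is empty; for $k=1$ we have $\sigma_1(x,y)-\sigma_1(x,z)=(x+y)-(x+z)=y-z$, matching the right-hand side, whose only term ($j=0$) is $(y-z)\,x^{0}\sigma_0(y,z)=(y-z)$. This anchors the induction and also sanity-checks the indexing of the sum.

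For the inductive step I would use the elementary recursion $\sigma_k(x,t)=x\,\sigma_{k-1}(x,t)+t^{k}$, valid for any $t$, which follows immediately from the definition of $\sigma_k$ as a sum of monomials. Applying it with $t=y$ and $t=z$ and subtracting gives
\begin{align*}
\sigma_k(x,y)-\sigma_k(x,z)=x\bigl(\sigma_{k-1}(x,y)-\sigma_{k-1}(x,z)\bigr)+\bigl(y^{k}-z^{k}\bigr).
\end{align*}
Into the first term I substitute the induction hypothesis for $k-1$, and into the second I substitute \eqref{fk}, namely $y^{k}-z^{k}=(y-z)\sigma_{k-1}(y,z)$. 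Both resulting terms then carry the common factor $(y-z)$, so after factoring it out I am left with
\begin{align*}
\sigma_k(x,y)-\sigma_k(x,z)=(y-z)\left(\sum_{j=0}^{k-2} x^{k-j-1}\sigma_j(y,z)+\sigma_{k-1}(y,z)\right).
\end{align*}
The final step is purely bookkeeping: the stray term $\sigma_{k-1}(y,z)$ is exactly the $j=k-1$ summand $x^{k-(k-1)-1}\sigma_{k-1}(y,z)=x^{0}\sigma_{k-1}(y,z)$, so it merges into the sum to produce $\sum_{j=0}^{k-1}x^{k-j-1}\sigma_j(y,z)$, which is precisely the right-hand side of \eqref{sig} for index $k$.

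There is no real obstacle here; the only place to be careful is the index arithmetic when the $j=k-1$ term is absorbed and when the induction hypothesis (stated for $k-1$) is plugged in with its sum running $j=0,\dots,k-2$ and exponent $x^{(k-1)-j-1}=x^{k-j-2}$, which after multiplication by the leading $x$ becomes $x^{k-j-1}$ as required. An alternative, non-inductive route would be to write $\sigma_k(x,y)-\sigma_k(x,z)=\sum_{i=0}^{k}x^{k-i}(y^{i}-z^{i})$ directly from the monomial expansion, apply \eqref{fk} to each difference $y^{i}-z^{i}=(y-z)\sigma_{i-1}(y,z)$, and then reindex the double sum; I would mention this as a remark but carry out the induction as the main argument since it is the cleanest to typeset.
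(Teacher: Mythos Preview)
Your induction is correct and all the index bookkeeping checks out. The paper, however, takes exactly the direct route you relegate to a closing remark: it writes $\sigma_k(x,y)-\sigma_k(x,z)=\sum_{j=0}^{k}x^{k-j}(y^j-z^j)$ straight from the definition, applies \eqref{fk} termwise to get $(y-z)\sum_{j=1}^{k}x^{k-j}\sigma_{j-1}(y,z)$, and shifts the index. That argument is three lines and needs no base case or recursion for $\sigma_k$; your induction buys nothing extra here and is longer to typeset, so I would swap your main argument and your remark.
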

\begin{proof} It is clear that
\begin{align*}
\sigma_k(x,y)-\sigma_k(x,z)&=\sum_{j=0}^k x^{k-j}y^j-\sum_{j=0}^k x^{k-j}z^j =\sum_{j=1}^k x^{k-j}(y^j-z^j).
\end{align*}
Using \eqref{fk}, we obtain
\begin{align*}
\sigma_k(x,y)-\sigma_k(x,z)=\sum_{j=1}^k x^{k-j}(y-z)\sigma_{j-1}(y,z)=(y-z)\sum_{j=0}^{k-1} x^{k-j-1}\sigma_j(y,z).
\end{align*}
\end{proof}
The next lemma gives a result for a $2 \times 2$ determinant.
\begin{lemma} For $0 \leqslant j \leqslant m,$ there holds:
\begin{align} \label{detjm}
\left|
\begin{array}{cc}
\sigma_j(x,y) & \sigma_m(x,y) \\
\sigma_j(x,z) & \sigma_m(x,z)
\end{array}
\right|
=(z-y)\sum_{r=0}^j \sum_{s=j+1}^m x^{j+m-r-s} y^rz^r \sigma_{s-r-1}(y,z).
\end{align}
Here, the double sum represents a symmetric polynomial with positive coefficients.
\end{lemma}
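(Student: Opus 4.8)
The plan is to prove \eqref{detjm} by a direct monomial expansion, which turns out to be short and transparent. First I would expand the two products occurring in the determinant straight from the definition of $\sigma$: since $\sigma_j(x,y)=\sum_{p=0}^{j}x^{j-p}y^{p}$ and $\sigma_m(x,z)=\sum_{q=0}^{m}x^{m-q}z^{q}$ (and likewise for $\sigma_m(x,y)$ and $\sigma_j(x,z)$), the determinant $D:=\sigma_j(x,y)\sigma_m(x,z)-\sigma_m(x,y)\sigma_j(x,z)$ equals $\sum_{p=0}^{j}\sum_{q=0}^{m}x^{j+m-p-q}y^{p}z^{q}-\sum_{p=0}^{m}\sum_{q=0}^{j}x^{j+m-p-q}y^{p}z^{q}$. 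Both double sums are sums of the same type of monomial $x^{j+m-p-q}y^{p}z^{q}$; since $j\leqslant m$, the index set of the first is the rectangle $[0,j]\times[0,m]$ and that of the second is $[0,m]\times[0,j]$, so the overlap $[0,j]\times[0,j]$ cancels and $D=\sum_{p=0}^{j}\sum_{q=j+1}^{m}x^{j+m-p-q}y^{p}z^{q}-\sum_{p=j+1}^{m}\sum_{q=0}^{j}x^{j+m-p-q}y^{p}z^{q}$.

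The second key step is to relabel $p\leftrightarrow q$ in the second sum, which makes its index rectangle coincide with that of the first and yields $D=\sum_{p=0}^{j}\sum_{q=j+1}^{m}x^{j+m-p-q}\bigl(y^{p}z^{q}-y^{q}z^{p}\bigr)$. Because $q\geqslant j+1>j\geqslant p$, each bracket factors as $y^{p}z^{q}-y^{q}z^{p}=(yz)^{p}\bigl(z^{q-p}-y^{q-p}\bigr)$, and \eqref{fk} together with the symmetry $\sigma_{k}(z,y)=\sigma_{k}(y,z)$ turns $z^{q-p}-y^{q-p}$ into $(z-y)\sigma_{q-p-1}(y,z)$. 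Substituting and renaming $p\to r$, $q\to s$ produces exactly $D=(z-y)\sum_{r=0}^{j}\sum_{s=j+1}^{m}x^{j+m-r-s}y^{r}z^{r}\sigma_{s-r-1}(y,z)$, which is \eqref{detjm}.

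For the closing assertion I would observe that in every term the exponent $j+m-r-s$ is nonnegative (since $r\leqslant j$ and $s\leqslant m$) and the index $s-r-1\geqslant(j+1)-j-1=0$ is admissible, so each $\sigma_{s-r-1}(y,z)$ is a genuine polynomial with positive coefficients; hence the whole double sum has positive coefficients and is visibly symmetric in $y$ and $z$, since both $y^{r}z^{r}$ and $\sigma_{s-r-1}(y,z)$ are (this matches the determinant changing sign under $y\leftrightarrow z$, which transposes its two rows, while $z-y$ also changes sign). I do not expect a genuine obstacle here: the argument is elementary, and the only place demanding care is the bookkeeping of the summation ranges — seeing that the square $[0,j]\times[0,j]$ cancels and that after the $p\leftrightarrow q$ swap the two leftover rectangles are identical — together with keeping the sign straight when passing from $z^{q-p}-y^{q-p}$ to $(z-y)\sigma_{q-p-1}(y,z)$. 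A slightly bulkier alternative is to write $D=\sigma_j(x,y)\bigl[\sigma_m(x,z)-\sigma_m(x,y)\bigr]+\sigma_m(x,y)\bigl[\sigma_j(x,y)-\sigma_j(x,z)\bigr]$ and invoke \eqref{sig}; and if one wants the sharper statement that the double sum is symmetric in all three variables, one can note that $(x-y)(x-z)D$ equals the alternating polynomial $x^{j+1}(y^{m+1}-z^{m+1})+y^{j+1}(z^{m+1}-x^{m+1})+z^{j+1}(x^{m+1}-y^{m+1})$, hence is divisible by the Vandermonde $(x-y)(y-z)(x-z)$, which forces $D/(z-y)$ to be symmetric.
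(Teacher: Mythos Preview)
Your proof is correct and follows essentially the same route as the paper. The paper writes the determinant directly as the single double sum $\sum_{r=0}^{j}\sum_{s=0}^{m}x^{j+m-r-s}(y^{r}z^{s}-z^{r}y^{s})$ and then observes that the antisymmetric pairs $(u,v)\leftrightarrow(v,u)$ inside the square $[0,j]^2$ cancel, whereas you keep the two products as separate double sums, cancel the common rectangle $[0,j]\times[0,j]$, and then swap indices in the remaining piece; both arguments land on the identical intermediate expression $\sum_{r=0}^{j}\sum_{s=j+1}^{m}x^{j+m-r-s}(y^{r}z^{s}-y^{s}z^{r})$ and finish via \eqref{fk} in the same way.
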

\begin{proof} It can be readily seen that
\begin{align*}
\left|
\begin{array}{cc}
\sigma_j(x,y) & \sigma_m(x,y) \\
\sigma_j(x,z) & \sigma_m(x,z)
\end{array}
\right| &= \sigma_j(x,y)\sigma_m(x,z)-\sigma_j(x,z)\sigma_m(x,y) \\
&=\sum_{r=0}^j \sum_{s=0}^m x^{j+m-r-s} (y^rz^s-z^ry^s).
\end{align*}
Now, the terms corresponding to the grid points $(r,s)=(u,v)$ and $(r,s)=(v,u)$ cancel. As such,
\begin{align*}
\left|
\begin{array}{cc}
\sigma_j(x,y) & \sigma_m(x,y) \\
\sigma_j(x,z) & \sigma_m(x,z)
\end{array}
\right| &=\sum_{r=0}^j \sum_{s=j+1}^m x^{j+m-r-s} (y^rz^s-z^ry^s) \\
&=\sum_{r=0}^j \sum_{s=j+1}^m x^{j+m-r-s} y^rz^r(z^{s-r}-y^{s-r})\\
&=(z-y)\sum_{r=0}^j \sum_{s=j+1}^m x^{j+m-r-s} y^rz^r \sigma_{s-r-1}(y,z).
\end{align*}
\end{proof}
Here comes a generalization of the $4\times 4$ Vandermonde determinant.
\begin{theorem}
Let $1\leqslant m<n$ be integers and 
\begin{align*}
F_{m,n}:=F_{m,n}(x,y,z,t)=\left|
\begin{array}{cccc}
1 & x & x^m & x^n \\
1 & y & y^m & y^n \\
1 & z & z^m & z^n \\
1 & t & t^m & t^n
\end{array}
\right|.
\end{align*}
Then,
\begin{multline*}
F_{m,n}=(y-x)(z-x)(t-x)(z-y)(t-y)(t-z) \times \\ 
\sum_{j=0}^{m-2}\sum_{k=0}^{n-m-1}x^{n-3-j-k}\sum_{r=0}^j \sum_{s=j+1}^{m+k-1}y^{m+j+k-r-s-1} z^r t^r \sigma_{s-r-1}(z,t).
\end{multline*}
\end{theorem}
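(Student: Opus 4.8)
The plan is to remove the six linear factors by two rounds of elementary row operations, thereby reducing $F_{m,n}$ to a single $2\times 2$ determinant, and then to evaluate that determinant by expansion together with \eqref{detjm}.

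First I would subtract the first row of $F_{m,n}$ from each of the remaining three. Using \eqref{fk} in the form $p^{\ell}-x^{\ell}=(p-x)\sigma_{\ell-1}(x,p)$ (and $p-x=(p-x)\sigma_0(x,p)$), then expanding along the first column and pulling $y-x$, $z-x$, $t-x$ out of the three nonzero rows, one gets
\[
F_{m,n}=(y-x)(z-x)(t-x)\left|
\begin{array}{ccc}
1 & \sigma_{m-1}(x,y) & \sigma_{n-1}(x,y) \\
1 & \sigma_{m-1}(x,z) & \sigma_{n-1}(x,z) \\
1 & \sigma_{m-1}(x,t) & \sigma_{n-1}(x,t)
\end{array}
\right|.
\]
If $m=1$ the last two columns coincide and both sides of the claimed identity vanish, so we may assume $m\geqslant 2$. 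Now I would repeat the manoeuvre on the $3\times 3$ determinant: subtract its first row from the other two, expand along the first column, and use \eqref{sig} (which gives $\sigma_{\ell-1}(x,w)-\sigma_{\ell-1}(x,y)=(w-y)\sum_{i=0}^{\ell-2}x^{\ell-2-i}\sigma_i(y,w)$ for $w\in\{z,t\}$) to factor out $z-y$ and $t-y$. This leaves
\[
F_{m,n}=(y-x)(z-x)(t-x)(z-y)(t-y)\left|
\begin{array}{cc}
\displaystyle\sum_{i=0}^{m-2}x^{m-2-i}\sigma_i(y,z) & \displaystyle\sum_{i=0}^{n-2}x^{n-2-i}\sigma_i(y,z) \\[2.4ex]
\displaystyle\sum_{i=0}^{m-2}x^{m-2-i}\sigma_i(y,t) & \displaystyle\sum_{i=0}^{n-2}x^{n-2-i}\sigma_i(y,t)
\end{array}
\right|.
\]

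Writing $u_i=\sigma_i(y,z)$ and $v_i=\sigma_i(y,t)$, this last determinant expands to $\sum_{i=0}^{m-2}\sum_{p=0}^{n-2}x^{m+n-4-i-p}(u_iv_p-u_pv_i)$. The contribution of the indices with $p\leqslant m-2$ runs over the symmetric square $\{0,\dots,m-2\}^2$, on which the summand is antisymmetric under $i\leftrightarrow p$ and hence sums to zero; in the surviving contribution I would put $p=m-1+k$, so that $k$ ranges over $0,\dots,n-m-1$ and $x^{m+n-4-i-p}=x^{n-3-i-k}$. This rewrites the determinant as
\[
\sum_{i=0}^{m-2}\sum_{k=0}^{n-m-1}x^{n-3-i-k}\left|
\begin{array}{cc}
\sigma_i(y,z) & \sigma_{m+k-1}(y,z) \\
\sigma_i(y,t) & \sigma_{m+k-1}(y,t)
\end{array}
\right|.
\]
Finally, each of these $2\times 2$ determinants is of exactly the type evaluated in \eqref{detjm}, applied with $j=i$, with $m$ there replaced by $m+k-1$, and with the variables $(x,y,z)$ there replaced by $(y,z,t)$; this supplies the factor $t-z$ and the inner double sum $\sum_{r=0}^{i}\sum_{s=i+1}^{m+k-1}y^{m+i+k-1-r-s}z^rt^r\sigma_{s-r-1}(z,t)$, which is precisely the inner sum in the statement after renaming $i$ as $j$. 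Collecting the six linear factors completes the argument.

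The step I expect to be the main obstacle is the penultimate one: recognizing that the block of the double sum indexed by $p\leqslant m-2$ cancels by antisymmetry, and then re-indexing the remaining terms so that the $\sigma$-subscript becomes $m+k-1$ while the exponent of $x$ becomes $n-3-i-k$. The rest is a routine, if lengthy, bookkeeping exercise built from \eqref{fk}, \eqref{sig} and \eqref{detjm}, where the only real care needed is with the ranges of summation.
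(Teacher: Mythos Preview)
Your argument is correct and follows the same outline as the paper's: two rounds of elimination to peel off five of the six linear factors, then lemma \eqref{detjm} to supply the last factor $(t-z)$ and the inner double sum. The one genuine difference is in the first reduction. The paper replaces $C_4$ by $C_4-x^{n-m}C_3$ (the printed exponent $n$ is a slip for $n-m$), which puts $y^m\sigma_{n-m-1}(x,y)$ rather than $\sigma_{n-1}(x,y)$ in the last column; after the second elimination this produces the index range $k=0,\dots,n-m-1$ directly, with no cancellation step required. Your route---plain row subtraction giving $\sigma_{n-1}$, followed by the antisymmetry observation that kills the block $p\leqslant m-2$ and the re-indexing $p=m-1+k$---is equally valid and arguably more uniform, since every column is treated the same way via \eqref{fk} and \eqref{sig}. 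One tiny slip: when $m=1$ it is the \emph{first} two columns of your $3\times3$ that coincide (both equal the constant column $1=\sigma_0$), not the last two; the conclusion that both sides vanish is of course unaffected.
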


\begin{proof}
Applying the operations $-x^nC_3+C_4,$ $-x^mC_1+C_3,$ $-xC_1+C_2,$ successively, and using \eqref{fk}, one gets 
\begin{align*}
F_{m,n}&=\left|
\begin{array}{ccc}
y-x & y^m-x^m & y^m(y^{n-m}-x^{n-m}) \\
z-x & z^m-x^m & z^m(z^{n-m}-x^{n-m}) \\
t-x & t^m-x^m & t^m(t^{n-m}-x^{n-m})
\end{array}
\right| \\
&=(y-x)(z-x)(t-x)\left|
\begin{array}{ccc}
1 & \sigma_{m-1}(x,y) & y^m\sigma_{n-m-1}(x,y) \\
1 & \sigma_{m-1}(x,z) & z^m\sigma_{n-m-1}(x,z) \\
1 & \sigma_{m-1}(x,t) & t^m\sigma_{n-m-1}(x,t)
\end{array}
\right|.
\end{align*}
Performing the operations $-R_1+R_2$ and $-R_1+R_3,$ then expanding along the first column, we obtain
\begin{align*}
F_{m,n}&=(y-x)(z-x)(t-x)\left|
\begin{array}{cc}
\sigma_{m-1}(x,z)-\sigma_{m-1}(x,y) & z^m\sigma_{n-m-1}(x,z)-y^m\sigma_{n-m-1}(x,y) \\
\sigma_{m-1}(x,t)-\sigma_{m-1}(x,y) & t^m\sigma_{n-m-1}(x,t)-y^m\sigma_{n-m-1}(x,y)
\end{array}
\right|.
\end{align*}
Using \eqref{sig}, we write
\begin{align*}
F_{m,n}&=(y-x)(z-x)(t-x)\left|
\begin{array}{cc}
(z-y)\sum_{j=0}^{m-2}x^{m-2-j}\sigma_j(y,z) & \sum_{k=0}^{n-m-1}x^{n-m-1-k}(z^{m+k}-y^{m+k}) \\
(t-y)\sum_{j=0}^{m-2}x^{m-2-j}\sigma_j(y,t) & \sum_{k=0}^{n-m-1}x^{n-m-1-k}(t^{m+k}-y^{m+k})
\end{array}
\right|\\
&=(y-x)(z-x)(t-x)(z-y)(t-y)\sum_{j=0}^{m-2}\sum_{k=0}^{n-m-1}x^{n-3-j-k}\left|
\begin{array}{cc}
\sigma_j(y,z) & \sigma_{m+k-1}(y,z) \\
\sigma_j(y,t) & \sigma_{m+k-1}(y,t)
\end{array}
\right|.
\end{align*}
Finally, with the help of \eqref{detjm}, one arrives at
\begin{multline*}
F_{m,n}=(y-x)(z-x)(t-x)(z-y)(t-y)(t-z) \times \\ 
\sum_{j=0}^{m-2}\sum_{k=0}^{n-m-1}x^{n-3-j-k}\sum_{r=0}^j 
\sum_{s=j+1}^{m+k-1}y^{j+m+k-1-r-s} z^r t^r \sigma_{s-r-1}(z,t).
\end{multline*}
\end{proof}
Yet another generalization of the Vandermonde determinant is as follows.
\begin{theorem}\label{Gmn}
Let $1\leqslant m<n$ be two integers and 
\begin{align*}
G_{m,n}:=G_{m,n}(x,y,z,t)=\left|
\begin{array}{cccc}
1 & x^m & x^n & x^{m+n} \\
1 & y^m & y^n & y^{m+n} \\
1 & z^m & z^n & z^{m+n} \\
1 & t^m & t^n & t^{m+n}
\end{array}
\right|.
\end{align*}
Then,
\begin{multline*}
G_{m,n}=(y-x)(z-x)(t-x)(t-y)(z-y)(t-z) \times \\ 
\sum_{k=m}^{n-1} \sum_{j=0}^{m-1} \sum_{p=0}^{m-1} \sum_{s=k-p}^{n-1}\sum_{r=0}^{k-j-1} 
x^{2m+n-3-k-p-j}y^{n+k-2-r-s}z^{j+r}t^{j+r}\sigma_{p+s-j-r-1}(z,t).
\end{multline*}
\end{theorem}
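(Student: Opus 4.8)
\emph{Plan.} The strategy is to imitate the reduction used for $F_{m,n}$, peeling off the six linear factors one block at a time. First, the column operations $C_4\to C_4-x^mC_3$, $C_3\to C_3-x^{n-m}C_2$, $C_2\to C_2-x^mC_1$ turn the top row of $G_{m,n}$ into $(1,0,0,0)$ (each of $x^{m+n}-x^mx^n$, $x^n-x^{n-m}x^m$ and $x^m-x^m$ vanishes). Expanding along the first column and then using \eqref{fk} to extract the factor $(w-x)$ from each of the remaining three rows ($w\in\{y,z,t\}$), via $w^m-x^m=(w-x)\sigma_{m-1}(x,w)$, $w^n-x^{n-m}w^m=w^m(w-x)\sigma_{n-m-1}(x,w)$ and $w^{m+n}-x^mw^n=w^n(w-x)\sigma_{m-1}(x,w)$, one obtains
\[
G_{m,n}=(y-x)(z-x)(t-x)\,D,\qquad
D=\left|
\begin{array}{ccc}
\sigma_{m-1}(x,y) & y^m\sigma_{n-m-1}(x,y) & y^n\sigma_{m-1}(x,y)\\[2pt]
\sigma_{m-1}(x,z) & z^m\sigma_{n-m-1}(x,z) & z^n\sigma_{m-1}(x,z)\\[2pt]
\sigma_{m-1}(x,t) & t^m\sigma_{n-m-1}(x,t) & t^n\sigma_{m-1}(x,t)
\end{array}
\right|.
\]

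In contrast with the $F_{m,n}$ computation, the first column of $D$ is not constant, so $D$ does not collapse to a single $2\times2$ determinant after one row reduction; coping with this is the heart of the matter. The remedy is to expand all three columns of $D$ by multilinearity, via $\sigma_{m-1}(x,w)=\sum_{j=0}^{m-1}x^{m-1-j}w^j$, $w^m\sigma_{n-m-1}(x,w)=\sum_{k=0}^{n-m-1}x^{n-m-1-k}w^{m+k}$ and $w^n\sigma_{m-1}(x,w)=\sum_{j'=0}^{m-1}x^{m-1-j'}w^{n+j'}$. Because the three exponent ranges $\{0,\dots,m-1\}$, $\{m,\dots,n-1\}$ and $\{n,\dots,n+m-1\}$ are pairwise disjoint, in every resulting summand the exponents satisfy $j<m+k<n+j'$, so the corresponding monomial determinant never vanishes; hence
\[
D=\sum_{j=0}^{m-1}\sum_{k=0}^{n-m-1}\sum_{j'=0}^{m-1}x^{m+n-3-j-k-j'}
\left|
\begin{array}{ccc}
y^j & y^{m+k} & y^{n+j'}\\[2pt]
z^j & z^{m+k} & z^{n+j'}\\[2pt]
t^j & t^{m+k} & t^{n+j'}
\end{array}
\right|.
\]

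Each of these $3\times3$ determinants is reduced exactly as in the closing part of the $F_{m,n}$ proof: factor $(yzt)^j$ out of the rows, leaving the determinant whose rows, for $w\in\{y,z,t\}$, are $(1,w^{\beta},w^{\gamma})$ with $\beta=m+k-j\ge1$ and $\gamma=n+j'-j>\beta$; subtract the first row from the other two and expand along the first column; use \eqref{fk} to pull out $(z-y)(t-y)$, leaving the $2\times2$ determinant with rows $\bigl(\sigma_{\beta-1}(y,w),\,\sigma_{\gamma-1}(y,w)\bigr)$, $w\in\{z,t\}$; and finally invoke \eqref{detjm} with the substitution $(x,y,z)\mapsto(y,z,t)$, which supplies the factor $(t-z)$ together with the two inner sums over $r$ and $s$ carrying $\sigma_{s-r-1}(z,t)$. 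Substituting $\beta,\gamma$ back and collecting the six linear factors, $G_{m,n}$ equals $(y-x)(z-x)(t-x)(t-y)(z-y)(t-z)$ times a five-fold sum over $j,k,j',r,s$.

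The announced form then results from a reindexing of this five-fold sum: setting $k'=m+k$ (so that $k'$ runs over $m,\dots,n-1$) and $s'=s+j-j'$, one checks that the exponents of $x,y,z,t$ and the order of $\sigma(z,t)$ become $2m+n-3-k'-j'-j$, $n+k'-2-r-s'$, $j+r$, $j+r$ and $j'+s'-j-r-1$, respectively, while the summation limits turn into $s'\in[\,k'-j',\,n-1\,]$ and $r\in[\,0,\,k'-j-1\,]$; renaming $k'\mapsto k$, $j'\mapsto p$ and $s'\mapsto s$ then gives exactly the claimed identity. The only genuinely delicate point in the whole argument is precisely this bookkeeping of the nested summation bounds and the four exponents; every determinant step is a routine use of \eqref{fk} and \eqref{detjm} (and, if one prefers to mirror the $F_{m,n}$ argument literally, of \eqref{sig} as well).
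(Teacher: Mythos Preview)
Your argument is correct, and it is a genuinely different (and in some respects cleaner) route than the one in the paper. Two differences stand out.

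First, the initial reduction is not the same: the paper uses $C_3\to C_3-x^{n}C_1$, which leaves the middle column of the $3\times 3$ determinant equal to $\sigma_{n-1}(x,\,\cdot\,)$; you use $C_3\to C_3-x^{n-m}C_2$, which leaves it equal to $w^{m}\sigma_{n-m-1}(x,w)$. Both choices extract the factors $(y-x)(z-x)(t-x)$, but they lead to different intermediate determinants.

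Second, and more significantly, the subsequent treatment of the $3\times 3$ determinant differs. The paper performs the column operation $-y^{n}C_1+C_3$, expands along the third column, applies \eqref{detjm} to each of the two resulting $2\times 2$ blocks, and obtains a five-fold sum whose range in $s$ is $0\le s\le n-1$; it must then split this $s$-sum at $s=k-p$ and argue, via a pairing $(j,p)\leftrightarrow(p,j)$, that the lower range cancels. Your approach bypasses this: by expanding all three columns by multilinearity, the column exponents $j\in\{0,\dots,m-1\}$, $m+k\in\{m,\dots,n-1\}$, $n+j'\in\{n,\dots,n+m-1\}$ are automatically pairwise distinct, so every monomial $3\times 3$ determinant contributes nontrivially and is handled uniformly by \eqref{fk} and \eqref{detjm}. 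The subsequent substitution $k'=m+k$, $s'=s+j-j'$ then lands directly on the stated form, with no cancellation step required. The trade-off is that your final reindexing carries the bookkeeping burden, whereas in the paper that burden appears earlier as the pairing-cancellation argument.
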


\begin{proof}
Applying the operations $-x^mC_3+C_4,$ $-x^nC_1+C_3,$ $-x^mC_1+C_2,$ successively, and using \eqref{fk}, we get 
\begin{align*}
G_{m,n}&=\left|
\begin{array}{ccc}
y^m-x^m & y^n-x^n & y^n(y^m-x^m) \\
z^m-x^m & z^n-x^n & z^n(z^m-x^m) \\
t^m-x^m & t^n-x^n & t^n(t^m-x^m)
\end{array}
\right| \\
&=(y-x)(z-x)(t-x)\left|
\begin{array}{ccc}
\sigma_{m-1}(x,y) & \sigma_{n-1}(x,y) & y^n\sigma_{m-1}(x,y) \\
\sigma_{m-1}(x,z) & \sigma_{n-1}(x,z) & z^n\sigma_{m-1}(x,z) \\
\sigma_{m-1}(x,t) & \sigma_{n-1}(x,t) & t^n\sigma_{n-1}(x,t)
\end{array}
\right|.
\end{align*}
Performing the operation $-y^nC_1+C_3$ and expanding along the third column, we obtain
\begin{align*}
G_{m,n}&=(y-x)(z-x)(t-x)\left|
\begin{array}{ccc}
\sigma_{m-1}(x,y) & \sigma_{n-1}(x,y) & 0 \\
\sigma_{m-1}(x,z) & \sigma_{n-1}(x,z) & (z^n-y^n)\sigma_{m-1}(x,z) \\
\sigma_{m-1}(x,t) & \sigma_{n-1}(x,t) & (t^n-y^n)\sigma_{m-1}(x,t)
\end{array}
\right|\\
&= (y-x)(z-x)(t-x) \left\{
(t^n-y^n)\sigma_{m-1}(x,t)
\left|
\begin{array}{cc}
\sigma_{m-1}(x,y) & \sigma_{n-1}(x,y)\\
\sigma_{m-1}(x,z) & \sigma_{n-1}(x,z)
\end{array}
\right| \right. \\ &\left.\qquad -
(z^n-y^n)\sigma_{m-1}(x,z)
\left|
\begin{array}{cc}
\sigma_{m-1}(x,y) & \sigma_{n-1}(x,y) \\
\sigma_{m-1}(x,t) & \sigma_{n-1}(x,t) 
\end{array}
\right| \right\}.
\end{align*}
Using \eqref{fk} and \eqref{detjm}, we find
\begin{align*}
G_{m,n}&=(y-x)(z-x)(t-x) \left\{
(t-y)\sigma_{m-1}(x,t)\sigma_{n-1}(y,t)
\left|
\begin{array}{cc}
\sigma_{m-1}(x,y) & \sigma_{n-1}(x,y)\\
\sigma_{m-1}(x,z) & \sigma_{n-1}(x,z)
\end{array}
\right| \right. \\ & \left. \qquad -
(z-y)\sigma_{m-1}(x,z)\sigma_{n-1}(y,z)
\left|
\begin{array}{cc}
\sigma_{m-1}(x,y) & \sigma_{n-1}(x,y) \\
\sigma_{m-1}(x,t) & \sigma_{n-1}(x,t) 
\end{array}
\right|\right\}\\
&=(y-x)(z-x)(t-x)(t-y)(z-y) \times \\
&\qquad \left\{\sigma_{m-1}(x,t)\sigma_{n-1}(y,t) \sum_{j=0}^{m-1} 
\sum_{k=m}^{n-1} x^{m+n-2-j-k} y^j z^j \sigma_{k-j-1}(y,z) \right. \\
&\qquad \left.-\sigma_{m-1}(x,z)\sigma_{n-1}(y,z) \sum_{j=0}^{m-1} 
\sum_{k=m}^{n-1} x^{m+n-2-j-k} y^j t^j \sigma_{k-j-1}(y,t)\right\} \\
&=(y-x)(z-x)(t-x)(t-y)(z-y)\sum_{j=0}^{m-1} \sum_{k=m}^{n-1} x^{m+n-2-j-k} y^j \Phi_{j,k}
\end{align*}
where
\begin{align*}
\Phi_{j,k}&=\sigma_{m-1}(x,t)\sigma_{n-1}(y,t)z^j \sigma_{k-j-1}(y,z)-
\sigma_{m-1}(x,z)\sigma_{n-1}(y,z)t^j \sigma_{k-j-1}(y,t)\\
&=\sum_{p=0}^{m-1} \sum_{s=0}^{n-1}\sum_{r=0}^{k-j-1} x^{m-1-p} y^{n-2-s+k-j-r} (z^{j+r}t^{p+s}-t^{j+r}z^{p+s}).
\end{align*}
Therefore,
\begin{multline*}
G_{m,n}=(y-x)(z-x)(t-x)(t-y)(z-y) \times \\ 
\sum_{k=m}^{n-1} \sum_{j=0}^{m-1} \sum_{p=0}^{m-1} \sum_{s=0}^{n-1}\sum_{r=0}^{k-j-1} 
x^{2m+n-3-k-p-j}y^{n+k-2-s-r}(z^{j+r}t^{p+s}-t^{j+r}z^{p+s}).
\end{multline*}
Now, dividing the sum over $s$ into two parts, one has
\begin{align*}
G_{m,n}&=(y-x)(z-x)(t-x)(t-y)(z-y) \times \\ & \qquad 
\sum_{k=m}^{n-1} \sum_{j=0}^{m-1} \sum_{p=0}^{m-1} \sum_{s=0}^{k-p-1}\sum_{r=0}^{k-j-1} 
x^{2m+n-3-k-p-j}y^{n+k-2-s-r}(z^{j+r}t^{p+s}-t^{j+r}z^{p+s})\\
&\quad +(y-x)(z-x)(t-x)(t-y)(z-y) \times \\ & \qquad 
\sum_{k=m}^{n-1} \sum_{j=0}^{m-1} \sum_{p=0}^{m-1} \sum_{s=k-p}^{n-1}\sum_{r=0}^{k-j-1} 
x^{2m+n-3-k-p-j}y^{n+k-2-s-r}(z^{j+r}t^{p+s}-t^{j+r}z^{p+s}).
\end{align*}
In the first sum, $j+r$ takes on the values $j,j+1,\ldots, k-1$ and $p+s$ takes on the values $p,p+1,\ldots, k-1.$
Since the pair $(k,p)$ runs over the integer nodes of the square $[0,m-1]\times [0, m-1],$ for each fixed $(k^*, p^*)$ there will be a counterpart $(p^*, k^*)$ and the counterparts cancel each other. Thus, the first sum vanishes and it remains
\begin{multline*}
G_{m,n}=(y-x)(z-x)(t-x)(t-y)(z-y) \times \\  \qquad 
\sum_{k=m}^{n-1} \sum_{j=0}^{m-1} \sum_{p=0}^{m-1} \sum_{s=k-p}^{n-1}\sum_{r=0}^{k-j-1} 
x^{2m+n-3-k-p-j}y^{n+k-2-s-r}(z^{j+r}t^{p+s}-t^{j+r}z^{p+s}).
\end{multline*}
Now, it is clear that in each term we have $j+r<p+s.$ Thus,
\begin{multline*}
G_{m,n}=(y-x)(z-x)(t-x)(t-y)(z-y)(t-z) \times \\  \qquad 
\sum_{k=m}^{n-1} \sum_{j=0}^{m-1} \sum_{p=0}^{m-1} \sum_{s=k-p}^{n-1}\sum_{r=0}^{k-j-1} 
x^{2m+n-3-k-p-j}y^{n+k-2-r-s}z^{j+r}t^{j+r}\sigma_{p+s-j-r-1}(z,t).
\end{multline*}
\end{proof}
As an application, the following result is achieved on uncorrelatedness sets.
\begin{theorem} \label{thm1} Let $X,Y\sim \textnormal{Unif}\{\alpha,\alpha\beta, \alpha\beta^2\}.$ If there exists $m\neq 1$ such that
$\{(j_i, mj_i):i=1,2,3,4\}\subseteq{\mathcal U}(X,Y),$ then $X$ and $Y$ are independent.
\end{theorem}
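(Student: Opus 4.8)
The plan is to rewrite the hypothesis as a homogeneous $4\times4$ linear system in the variables $y_1,\dots,y_4$ of \eqref{syst2}, to identify its coefficient determinant with one of the Vandermonde-type determinants evaluated above, and to conclude that this determinant cannot vanish, so that only the trivial solution survives. First, using Remark \ref{rem:W} together with the symmetry of independence in $X$ and $Y$, I may assume the slope $m$ is $>1$; since every point $(j_i,mj_i)$ lies in ${\mathbb N}^2$, the slope is rational, say $m=p/q$ in lowest terms with $1\leqslant q<p$, and each $j_i$ is then a multiple of $q$, say $j_i=q\ell_i$ with $\ell_i\in{\mathbb N}$. As the four points are assumed distinct, so are the $\ell_i$, hence also the numbers $w_i:=\beta^{\ell_i}>1$.

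Next I would substitute $\beta^{j_i}=w_i^{q}$, $\beta^{mj_i}=w_i^{p}$ and $\beta^{(m+1)j_i}=w_i^{p+q}$ into the condition \eqref{syst2} for each $i$. This turns the four memberships $(j_i,mj_i)\in{\mathcal U}(X,Y)$ into
\begin{align*}
y_1+w_i^{q}\,y_2+w_i^{p}\,y_3+w_i^{p+q}\,y_4=0,\qquad i=1,2,3,4,
\end{align*}
a homogeneous linear system whose coefficient matrix has rows $(1,w_i^{q},w_i^{p},w_i^{p+q})$, and hence determinant $G_{q,p}(w_1,w_2,w_3,w_4)$ in the notation of Theorem \ref{Gmn} (when $q=1$ this coincides with $F_{p,p+1}(w_1,\dots,w_4)$). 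Invoking Theorem \ref{Gmn}: the six factors $(w_{i'}-w_i)$ are nonzero because the $w_i$ are pairwise distinct, and since $1\leqslant q<p$ the index ranges in the multiple sum are all nonempty, while $w_1,\dots,w_4>0$ and each $\sigma_\ell$ evaluated at positive arguments is positive, so that sum is a sum of strictly positive terms. Therefore $G_{q,p}(w_1,w_2,w_3,w_4)\neq0$.

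Consequently the only solution of the $4\times4$ system is $y_1=y_2=y_3=y_4=0$, and by the correspondence between \eqref{syst} and \eqref{syst2} also $x_1=x_2=x_3=x_4=0$. Then every entry of Table \ref{lab:pmf} equals $\frac19$, so ${\mathcal U}(X,Y)={\mathbb N}^2$ and $X$ and $Y$ are independent. The main obstacle is the second step: one must carry out the substitution $j_i=q\ell_i$ correctly — this is precisely where the hypothesis $mj_i\in{\mathbb N}$ enters — and match the resulting matrix with $G_{q,p}$ by checking that the exponents occurring are exactly $0,q,p,p+q$; after that, the non-vanishing of the determinant, and hence the whole conclusion, follows at once from Theorem \ref{Gmn}, the positivity of its combinatorial sum, and the distinctness of the $w_i$. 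It is also worth stating explicitly that the four points are taken to be distinct, since with fewer than four distinct points the hypothesis yields fewer equations and the conclusion can fail (compare the diagonal set $L$ of Lemma \ref{lem3}, which corresponds to $m=1$).
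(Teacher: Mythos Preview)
Your proof is correct and follows essentially the same route as the paper: reduce to slope $>1$ via Remark \ref{rem:W}, write $m=p/q$ so that the four membership conditions become a homogeneous $4\times4$ system with coefficient determinant $G_{q,p}(\beta^{\ell_1},\dots,\beta^{\ell_4})$, and invoke Theorem \ref{Gmn} to see this determinant is nonzero. If anything, your write-up is more careful than the paper's: you note that taking $m$ in lowest terms is what guarantees $q\mid j_i$, and you spell out why the combinatorial sum in Theorem \ref{Gmn} is strictly positive.
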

\begin{proof} If $(j_i, mj_i)\in{\mathbb N}^2,$ then $m$ should be a rational number, say $m=b/a.$ Since $m\neq 1,$ we have $a\neq b.$ Due to Remark \ref{rem:W}, without loss of generality, one may take $a<b.$ Since, by Theorem \ref{Gmn}, 
$G_{a,b}(\beta^{j_1/b},\beta^{j_2/b},\beta^{j_3/b},\beta^{j_4/b}) \neq 0,$ the system 
$$
y_1 + \beta^{j_i} y_2 + \beta^{mj_i} y_3 + \beta^{(1+m)j_i} y_4=0, \quad i=1,2,3,4 
$$
has only the trivial solution, implying that this system becomes true for all $(j,k)\in {\mathbb N}^2.$ Hence, ${\mathcal U}(X,Y)={\mathbb N}^2,$ meaning that $X$ and $Y$ are independent. 
\end{proof}

This theorem shows that no 4 points on the line $y=mx,$ $m\neq 1,$ with positive integer coordinates may form an uncorrelatedness set for such random variables. The sharpness of this result is demonstrated in the next statement.

\begin{theorem} \label{thm2} Given integer $m\geqslant 2,$ let $\beta_0=\beta_0(m)$ be the unique solution of $\beta^{m+1}-\beta^2-\beta-1=0$ on $(1, \infty).$ Then, for every $\beta \geqslant \beta_0,$ there exist 
$X, Y\sim \textnormal{Unif}\{\alpha,\alpha\beta,\alpha\beta^2\}$ for which 
${\mathcal U}(X,Y) = {\mathcal A}:=\{(1,m),(2,2m),(3,3m)\}.$
Nevertheless, if $k>4m,$ there exists $\beta\in(1,\beta_0)$ such that $\{(4,k)\} \cup {\mathcal A} \subseteq {\mathcal U}(X,Y) \neq {\mathbb N}^2$ for some $X, Y\sim \textnormal{Unif}\{\alpha,\alpha\beta,\alpha\beta^2\}.$
\end{theorem}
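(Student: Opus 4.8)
The plan is to reduce the entire statement to the behaviour of one fractional-linear map. Put $u:=\beta>1$ and work with \eqref{syst2}. The conditions $(1,m),(2,2m),(3,3m)\in\mathcal U(X,Y)$ ask $(y_1,y_2,y_3,y_4)$ to annihilate the rows $\rho_i=(1,u^{i},u^{im},u^{i(m+1)})$, $i=1,2,3$. Since $\rho_i=(x_i^{0},x_i^{1},x_i^{m},x_i^{m+1})$ with $x_i=u^{i}$ distinct and the exponents $0,1,m,m+1$ distinct, every $3\times3$ minor of $[\rho_1;\rho_2;\rho_3]$ is a nonzero generalized Vandermonde determinant, so the solution set is a line $\mathbb R\cdot(y_1,y_2,y_3,y_4)$ with $y_4\neq0$. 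Factoring $Q(s,t):=y_1+y_2s+y_3t+y_4st=(y_3+y_4s)(t-T(s))$, membership $(j,k)\in\mathcal U(X,Y)$ becomes $u^{k}=T(u^{j})$, except at the at most one $j$ with $y_3+y_4u^{j}=0$; here $T$ is the unique fractional-linear map with $T(u^{i})=u^{im}$, $i=1,2,3$. Computing $T$ from the cross-ratio identity $(T(s);u^{m},u^{2m},u^{3m})=(s;u,u^{2},u^{3})$, I expect
\[
T(s)=\frac{(u^{2m}-\lambda u^{3m})\,s-u^{2m+3}+\lambda u^{3m+2}}{(1-\lambda)\,s-u^{3}+\lambda u^{2}},\qquad \lambda=\frac{1+u}{1+u^{m}},\quad 1-\lambda=\frac{u^{m}-u}{1+u^{m}}>0,
\]
with pole $s^{*}=-y_3/y_4=u(u^{m+1}-1)/(u^{m-1}-1)$. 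The two facts driving everything are the elementary inequality $s^{*}>u^{3}$ (true for all $u>1$) and the pivotal equivalence $u^{4}\geqslant s^{*}\iff u^{m+1}\geqslant u^{2}+u+1\iff u\geqslant\beta_{0}$ (clear denominators and divide by $u-1>0$). A one-line computation gives that the numerator of $T$ is negative at $s^{*}$, and since that numerator is affine with negative slope $u^{2m}(1-\lambda u^{m})$ while the denominator equals $(1-\lambda)(s-s^{*})$, one obtains $T(s)<0$ for every $s>s^{*}$.

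For the first assertion I would fix $\beta\geqslant\beta_{0}$ and take $X,Y$ corresponding to the line of solutions (rescaled by Remark \ref{sca}). Since $s^{*}>u^{3}$, for $j\in\{1,2,3\}$ the value $T(u^{j})=u^{mj}$ is finite and $u^{k}=T(u^{j})$ forces $k=mj$, producing exactly the three points of $\mathcal A$. For $j\geqslant4$ we have $u^{j}\geqslant u^{4}\geqslant s^{*}$; when $u^{j}>s^{*}$ this gives $T(u^{j})<0<u^{k}$ for every $k\in\mathbb N$, so no point with $j\geqslant4$ enters $\mathcal U(X,Y)$. The sole borderline case is $\beta=\beta_{0}$, $j=4$: there $u^{4}=s^{*}$, i.e.\ $y_3+y_4u^{4}=0$, so $Q(u^{4},u^{k})=y_1+y_2u^{4}$ is constant in $k$; were it $0$ we would have $v_4\subseteq\mathcal U(X,Y)$, whence by Corollary \ref{cor:lp} and the corollary following Theorem \ref{thm3p} (using $(1,m),(2,2m)\notin v_4$) $\mathcal U(X,Y)=\mathbb N^{2}$, forcing $(y_1,y_2,y_3,y_4)=0$, a contradiction. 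Hence $\mathcal U(X,Y)=\mathcal A$.

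For the second assertion I would fix $\beta\in(1,\beta_{0})$, where now $u^{4}<s^{*}$, so $y_3+y_4u^{4}\neq0$ and $(4,k)\in\mathcal U(X,Y)\iff u^{k}=T(u^{4})$. Substituting $s=u^{4}$ and simplifying yields
\[
T(u^{4})=\frac{-u^{2m+1}\,\bigl(u^{m+1}+u^{m}+u^{m-1}-1\bigr)}{u^{m+1}-u^{2}-u-1},
\]
which is finite and strictly positive on $(1,\beta_{0})$. Put $\psi(\beta):=\ln T_{\beta}(\beta^{4})/\ln\beta$, continuous on $(1,\beta_{0})$. As $\beta\to\beta_{0}^{-}$ the denominator tends to $0^{-}$ and the numerator to a finite negative limit, so $\psi(\beta)\to+\infty$. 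As $\beta\to1^{+}$, writing $N(u)$ and $D(u)$ for the numerator and denominator above, one has $N(1)=D(1)=-2$, $N'(1)=-(7m+2)$, $D'(1)=m-2$, hence $T_{\beta}(\beta^{4})=1+4m(\beta-1)+o(\beta-1)$ and $\psi(\beta)\to4m$. Moreover $\psi\neq4m$ on $(1,\beta_{0})$, since $\psi(\beta)=4m$ would place the four points $(i,im)$, $i=1,2,3,4$, in $\mathcal U(X,Y)$, contradicting the non-vanishing of the determinant $G_{1,m}(\beta,\beta^{2},\beta^{3},\beta^{4})$ of Theorem \ref{Gmn} (cf.\ Theorem \ref{thm1}). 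Continuity therefore forces $\psi>4m$ throughout $(1,\beta_{0})$, and by the intermediate value theorem $\psi$ takes every value in $(4m,+\infty)$. Thus, given an integer $k>4m$, there is $\beta\in(1,\beta_{0})$ with $T_{\beta}(\beta^{4})=\beta^{k}$; the associated $X,Y$ satisfy $\{(4,k)\}\cup\mathcal A\subseteq\mathcal U(X,Y)$, and $\mathcal U(X,Y)\neq\mathbb N^{2}$ because $(y_1,y_2,y_3,y_4)\neq0$ (if every $(j,k)$ satisfied \eqref{syst2}, then fixing $j$ and varying $k$ would force $y_1+y_2u^{j}=y_3+y_4u^{j}=0$ for all $j$, i.e.\ $(y_1,y_2,y_3,y_4)=0$).

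The main obstacle is the first assertion's uniform control of $T(u^{j})$ over all $j\geqslant4$: everything rests on producing $s^{*}$ in closed form and on establishing $u^{4}\geqslant s^{*}\iff\beta\geqslant\beta_{0}$, the computation that singles out $\beta_{0}$. A secondary delicate point is the limit $\psi(\beta)\to4m$ as $\beta\to1^{+}$: the crude substitution only gives $T_{\beta}(\beta^{4})\to1$, so the first-order Taylor coefficients are genuinely needed, and the identity $8m=(7m+2)+(m-2)$ is what produces the barrier $4m$ in the statement. Finally, the borderline case $\beta=\beta_{0}$ in the first assertion has to be disposed of separately through the line-versus-point corollaries, as indicated above.
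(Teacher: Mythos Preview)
Your argument is correct and takes a genuinely different route from the paper's. The paper substitutes the explicit one-parameter solution into \eqref{syst2} to get a scalar $D(j,k)$, observes that $D(\cdot,k)$ is affine in $\beta^{j}$ with negative slope so that $D(j,k)\leqslant D(4,k)$ for $j\geqslant4$, and factors $D(4,k)=(1-\beta)\beta^{2}P(\beta)$ with $P(\beta)=(\beta^{m+1}-\beta^{2}-\beta-1)\beta^{k}+(\beta^{m+2}+\beta^{m+1}+\beta^{m}-\beta)\beta^{2m}$. For $\beta\geqslant\beta_{0}$ the first bracket is nonnegative and the second positive, so $D(4,k)<0$ \emph{strictly} even at $\beta=\beta_{0}$; this disposes of your borderline case without the line-and-point corollaries. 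For the second assertion the paper fixes $k>4m$ and checks $P(1)=0$, $P'(1)=8m-2k<0$, $P(\beta_{0})>0$, producing a root of $P$ in $(1,\beta_{0})$ directly. Your function $\psi$ packages the same information, but note that the detour through $\psi\neq4m$ via Theorem~\ref{thm1} is not actually needed: the limits $\psi\to4m$ and $\psi\to+\infty$ already force, by the intermediate value theorem, that $\psi$ attains every value larger than $4m$. What your M\"obius framing buys is a transparent geometric reason for the appearance of $\beta_{0}$ (it is exactly where the pole $s^{*}$ crosses $u^{4}$), at the cost of a longer argument and a separate treatment of the pole; the paper's computation is shorter and entirely elementary.
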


\begin{proof} If $m\geqslant 2,$ then the general solution of the system
\begin{align*}
y_1 + \beta y_2 + \beta^{m} y_3 + \beta^{m+1} y_4 &=0 \\
y_1 + \beta^2 y_2 + \beta^{2m} y_3 + \beta^{2m+2} y_4 &=0 \\
y_1 + \beta^3 y_2 + \beta^{3m} y_3 + \beta^{3m+3} y_4 &=0
\end{align*}  
is
\begin{align*}
y_1&=(\beta^m-\beta)\beta^{2m+2}\gamma, &  y_2&=(1-\beta^{m+1})\beta^{2m}\gamma,\\ 
y_3&=(\beta^{m+1}-1)\beta^2\gamma, & y_4&=(\beta-\beta^{m})\gamma,  \quad \gamma\in{\mathbb R}.
\end{align*}

For $\gamma\neq 0,$ let such a solution satisfy \eqref{syst2}. Then, 
\begin{align*}
D(j,k):=(\beta^{m}-\beta)(\beta^{2m+2}-\beta^{j+k})+(\beta^{m+1}-1)(\beta^{k+2}-\beta^{j+2m})=0.
\end{align*}
Note that
\begin{align*}
&D(1,k)=(\beta^2-1)\beta^{m+1}(\beta^k-\beta^m) =0 \Leftrightarrow k=m, \\
&D(2,k)=\beta^2(\beta-1)(\beta^m+1)(\beta^k-\beta^{2m}) =0 \Leftrightarrow k=2m, \\
&D(3,k)=\beta^2(\beta^2-1)(\beta^k-\beta^{3m}) =0 \Leftrightarrow k=3m. 
\end{align*}
Let $j\geqslant 4.$ 
Since the coefficient of $\beta^j$ in $D(j,k)$ is negative, one has 
$D(j,k)\leqslant D(4,k)$ for all $j \geqslant 4.$
Meanwhile, $D(4,k)=(1-\beta)\beta^2 P(\beta)$ where
\begin{align*}
P(\beta)=(\beta^{m+1}-\beta^2-\beta-1) \beta^{k}
+(\beta^{m+2}+\beta^{m+1}+\beta^m-\beta)\beta^{2m}.
\end{align*}
As the polynomial $\beta^{m+1}-\beta^2-\beta-1$ is increasing on $(1,\infty),$ it has a unique root $\beta_0,$ which is in the interval $(1,2).$ Hence, $D(4,k)<0$ for all positive integers $k$ whenever $\beta \geqslant \beta_0.$ 
Therefore, $D(j,k)$ vanishes if and only if $(j,k)\in{\mathcal A},$ which means that ${\mathcal A}$ can serve as an uncorrelatedness set. 

For the second part of the claim, let $k>4m$ be fixed. Obviously,
$P(1)=0$
and $P(\beta_0)>0.$
Moreover,
$P'(1) = 8m-2k <0.$
Therefore, there exists $\beta^*\in(1,\beta_0)$ for which $D(4,k)=0.$ As a result, $(4,k)\in{\mathcal U}(X,Y).$
For dependent $X$ and $Y,$ if ${\mathcal A}\subseteq {\mathcal U}(X,Y),$ then
$(4,4m)$ cannot be in ${\mathcal U}(X,Y)$ as claimed by Theorem \ref{thm1}.
\end{proof}

\begin{remark} The statement holds for any 3 points on the line $y=mx, m\neq 1.$ However, although the idea of the proof remains the same, the calculations appear to be significantly more complicated. For this reason, the proof is not presented here. The result shows that any 3 points on the line $y=mx, m\neq 1,$ form an uncorrelatedness set in contrast to the cases of a vertical line, a horizontal line and the first bisector by Theorem \ref{thm3p} (i), (ii) and Lemma \ref{lem3}, respectively.
\end{remark}

\section{Concluding remarks and open problems}\label{sec:op}

The outcomes of the present paper show that, even in the case when distributions of $X$ and $Y$ are rather simple, such as  uniform distributions on three points, the description of admissible uncorrelatedness sets may be quite tedious. This, however, depends on the values assumed by the random variables, as Theorem \ref{thm2} shows convincingly.  

Among relatively easy cases, it would be illustrative to consider $X$ and $Y$ uniformly distributed on $\{-\alpha, 0, \alpha\}$ for $\alpha>0.$ Then, one has $A_j=1-(-1)^j.$ Clearly, the parity of $j$ and $k$ matters, thus inspiring the consideration of lattices ${\mathcal A}_1=2{\mathbb N} \times 2{\mathbb N},$ ${\mathcal A}_2=2{\mathbb N} \times (2{\mathbb N}+1),$ 
${\mathcal A}_3=(2{\mathbb N}+1) \times 2{\mathbb N}$ and ${\mathcal A}_4=(2{\mathbb N}+1) \times (2{\mathbb N}+1).$
As the 4-tuple $(1,0,0,0)$ does not satisfy the system \eqref{syst} for any $(j,k)\in{\mathbb N}^2,$ the empty set is a possible uncorrelatedness set.
It occurs that there are no finite uncorrelatedness sets for $X,Y\sim\textnormal{Unif}\{-\alpha, 0, \alpha\}$ other than the empty set, which is by no means similar to the case treated by Theorem \ref{any}. 
Furthermore, for such distributions, if ${\mathcal A}_i \cap {\mathcal U}(X,Y) \neq \emptyset,$ then ${\mathcal A}_i \subseteq {\mathcal U}(X,Y)$ for $i=1,2,3,4.$ In fact, it is not difficult to show that the only admissible uncorrelatedness sets are $\emptyset,$ ${\mathcal A}_i,$ ${\mathcal A}_i\cup {\mathcal A}_j,$ ${\mathcal A}_i\cup {\mathcal A}_j\cup {\mathcal A}_k$ and ${\mathbb N}^2.$ This completely describes all feasible uncorrelatedness sets for such distributions.

In distinction, if $X,Y\sim\textnormal{Unif}\{1,2,3\},$ then the identification of uncorrelatedness sets leads to the  investigation of the $4\times 4$ determinants, whose $i$th rows are $[1 \ A_{j_i} \ A_{k_i} \ A_{j_i}A_{k_i}],$ where $A_j=(3^j-1)/(3^j-2^j).$ The necessary and sufficient condition for such determinants to vanish is a challenging open problem. 
Even more so is the case of $\textnormal{Unif}\{a,b,c\},$ which generates much more complicated settings. 
The results of the current paper fall in between these two ventures and initiate a first attempt to address new problems related to uncorrelatedness sets of discrete distributions.

\section*{Acknowledgments}

The authors express their appreciation to Mr. P. Danesh, Atilim University Academic Writing and Advisory Centre, for improving the presentation of the paper.


\begin{thebibliography}{99}

\bibitem{Cuadras} C. M. Cuadras, First principal component characterization of a continuous random variable, 
Universitat de Barcelona, Institut de Matem\`{a}tica, 
Mathematics Preprint Series, No 327 (2003).

\bibitem{davidedwards} H.A. David, A.W.F. Edwards, {\it Annotated Readings in the History of Statistics}, 
New York, Springer (2001).

\bibitem{david} H.A. David, A Historical Note on Zero Correlation and Independence, 
{\it The American Statistician} {\bf 63} (2) (2009),  185--186.

\bibitem{proc} S. Ostrovska, Uncorrelatedness Sets for Random Variables with Given Distributions, 
{\it Proceedings of the American Mathematical Society}, 133(4), 2005, pp. 1239--1246.
 
\bibitem{sofiyajmaa} S. Ostrovska, {\it Uncorrelatedness Sets of Bounded Random Variables}, 
Journal of Mathematical Analysis and Applications 297 (2004), pp. 257 -- 266.
 
\bibitem{prasolov} V. V. Prasolov, Problems and Theorems in Linear Algebra, 
Translation of mathematical monographs, Volume 134, American Mathematical Society, 1994.

\bibitem{counter} J. Stoyanov, Counterexamples in Probability, 3nd edn. Dover Publications, New York, 2013.

\bibitem{stoyanov} J. Stoyanov, Sets of Binary Random Variables with a Prescribed Independence/Dependence Structure, 
{\it The Mathematical Scientist}, {\bf 28} (1) (2003), 19--27.
 
\end{thebibliography}
\end{document}